\newcommand{\D}{\mathcal D}
\newcommand{\e}{\varepsilon}
\newcommand{\N}{\mathbb{N}}
\newcommand{\n}{\eta}
\renewcommand{\o}{\omega}
\newcommand{\s}{\sigma}
\newcommand{\vfi}{\varphi}
\newcommand{\R}{\mathbb{R}}
\newcommand{\sn}{\s^\n}
\newcommand{\en}{\e^\n}
\newcommand{\un}{u^\n}
\renewcommand{\rm}{{\R^m}}
\renewcommand{\O}{\Omega}
\newcommand{\lsol}{L^{p}_{\textup{sol}}(\O)}
\newcommand{\lpsol}{L^{p'}_{\textup{sol}}(\O)}
\newcommand{\lpot}{L^p_{\textup{pot}}(\O)}
\newcommand{\vpot}{\mathcal V^p_{\textup{pot}}}
\newcommand{\vppot}{\mathcal V^{p'}_{\textup{pot}}}
\newcommand{\vsol}{\mathcal V^{p}_{\textup{sol}}}
\newcommand{\vpsol}{\mathcal V^{p'}_{\textup{sol}}}
\newcommand{\W}{{W_{\phantom 0}^{{\scriptscriptstyle -}1,p'}}}
\newcommand{\ds}{\displaystyle}
\renewcommand{\div}{\mbox{div}}
\newcommand{\curl}{\mbox{curl}}
\newcommand{\weakto}{\rightharpoonup}
\def\pref#1{(\ref{#1})}
\def\Xint#1{\mathchoice
   {\XXint\displaystyle\textstyle{#1}}%
   {\XXint\textstyle\scriptstyle{#1}}%
   {\XXint\scriptstyle\scriptscriptstyle{#1}}%
   {\XXint\scriptscriptstyle\scriptscriptstyle{#1}}%
   \!\int}
\def\XXint#1#2#3{{\setbox0=\hbox{$#1{#2#3}{\int}$}
     \vcenter{\hbox{$#2#3$}}\kern-.5\wd0}}
\def\dashint{\Xint-}
\newtheorem{lemma}{Lemma}
\newtheorem{theo}{Theorem}
\newtheorem{remark}{\mdseries{\itshape{Remark}}}
\newtheorem{example}{\mdseries{\itshape{Example}}}
\newenvironment{rem}{\begin{remark} \upshape}{\end{remark}}
\begin{document}

\title{Stochastic homogenization of subdifferential inclusions via scale integration}
\author{Marco Veneroni\footnote{Fakult\"at f\"ur Mathematik, Technische Universit\"at Dortmund, 44227 Dortmund, Germany. E-mail address: marco.veneroni@math.uni-dortmund.de}}

\date{November 11, 2010}
\maketitle

%
\normalsize

\begin{abstract}
We study the stochastic homogenization of the system
\begin{equation*}
	\left\{
	\begin{array}{l}
		\ds-\textup{div}\, \sn = f^\n \\
		\ds\sn \in \partial \phi_\n (\nabla \un),
	\end{array}
	\right.
\end{equation*}	
where $\phi_\n$ is a sequence of convex stationary random fields, with $p$-growth. We prove that sequences of solutions $(\sn,\un)$ converge to the solutions of a deterministic system having the same subdifferential structure. The proof relies on Birkhoff's ergodic theorem, on the maximal monotonicity of the subdifferential of a convex function, and on a new idea of scale integration, recently introduced by A. Visintin. 
\end{abstract}

\noindent\textbf{MSC:} 35B27, 35R60 (35J60, 39B62).\\


\noindent\textbf{Keywords:} stochastic homogenization, 
differential inclusion, scale integration

\section{Introduction and main result}
In this paper we study the behaviour, as $\n \to 0$, of the solutions $\un(\cdot,\o),\sn(\cdot,\o)$ of the problem
\begin{equation}
\label{pizza}
	\left\{
	\begin{array}{ll}
		\ds-\textup{div}\, \sn(x,\o) = f^\n(x)  &\mbox{in }Q \times \O,\vspace{0.2cm}\\
		\ds\sn(x,\o) \in \partial \phi_\n (\nabla \un(x,\o),x,\o)\qquad &\mbox{in }Q \times \O,\vspace{0.2cm}\\
		\un(x,\o) = 0 &\mbox{on }\partial Q \times \O,
	\end{array}
	\right.
\end{equation}	
where $Q$ is a bounded domain in $\rm$ and $(\O,\mathcal F,\mu)$ is a probability space. The map $\xi \to \phi(\xi,\o)$ is convex for $\mu$-a.e. $\o \in \O$ and satisfies classical $p$-growth and coercivity conditions for $1<p<+\infty$. The \textit{oscillating} function $\phi_\n:\rm \times Q \times \O \to \R$ is defined as 
$$
	\phi_\n(\xi,x,\o):= \phi(\xi,T_{x / \n}\o),
$$	
for a dynamical system $T_x:\O \to \O$ on $\rm$.

\paragraph{A simple example.}
We refer to Section \ref{sec:prelergo} for the general definitions and results regarding probability, but in order to have a clearer picture of the setting we anticipate a simple one-dimensional example. 

Let $p=2$, $m=1$, $a_1,a_2,q_1,q_2\in \R$, $Q=(q_1,q_2)$, and consider a random function on $\R \times Q$ which in every point $x \in Q$ has probability $P\in (0,1)$ to have the form $\phi_1(\xi,x)=a_1\xi^2$ and probability $1-P$ to have the form $\phi_2(\xi,x)=a_2\xi^2$. The idea of homogenization is to approximate this random function by partitioning $Q$ into intervals of length $\n>0$, defining the random function $\phi_\n$ independently on each interval, and letting $\n$ tend to zero. Let 
$$M:=\left\{\o: \R\to \{a_1,a_2\}:\o \mbox{ is constant on every interval $(n,n\!+\!1)$, $n\in \mathbb N$}\right\},$$
then the dynamical system $T_x$ can be chosen as the shift operator
$$
\begin{array}{rclc}
	T_x: &L^\infty(\R) &\to 		&L^\infty(\R)\\
		& \o(\cdot)		&\mapsto	&\o(\cdot +x ),
\end{array}
$$		
and the set $\O$ is given by the set of all functions obtained from functions in $M$ by a shift $x \in \R$, i.e. $\O=M \times \mathbb T$, where $\mathbb T=\R/ \mathbb Z $ is the $1$-dimensional torus.
The probability measure on $\O$ is then the product measure of the measure induced by $P,1-P$ on $M$, times the Lebesgue measure on $\mathbb T$ (for further examples and details see, e.g., \cite[Section 7.3, ``Random Structure of Chess-board Type"]{JikovKozlovOleinik94} or \cite[pages 349--350]{DalmasoModica86a}). Finally, for all $\o$ which are continuous in 0, define
$$ \phi_\n(\xi,x,\o):=\o(x/\n)|\xi|^2.$$

Rescaling the parameter $x$ of the dynamical system by $1/\n$ corresponds, for vanishing $\n$, to a finer and finer mixing of the realizations of $\phi$. Under hypothesis of ergodicity for $T$ with respect to $\O$, it is then natural to expect the limit system to be deterministic. We prove that for almost every $\o \in \O$, sequences $(\sn,\un)$ of solutions of \pref{pizza} converge, as $\n \to 0$, to the solutions of a subdifferential inclusion governed by a deterministic convex function $\phi_0:\rm \to \R$.

Regarding the example above, in the case $P=1/2$, it is well known (see, e.g., \cite[page 350]{DalmasoModica86a}) that the limit function is given by
$$ \phi_0(\xi)=c|\xi|^2,$$
where $c=2a_1a_2/(a_1+a_2)$ is the harmonic mean of $a_1$ and $a_2$. 
 
 We turn to the main result of the paper. 
\begin{theo}
\label{th:nonlinear}
Let $(\O, \mathcal F, \mu)$ be a probability space with an ergodic dynamical system $T_x:\O \to \O$. Let $Q \subset \rm$ be a bounded domain, let $p\in (1,+\infty)$, $p'=p/(p-1)$, $(f^\n)_{\n>0} \subset W^{-1,p'}(Q)$, with $f^\n \to f$ strongly in $W^{-1,p'}(Q)$. 
Let $c_0,c_1,c_2>0$ and $\phi:\rm \times \O \to \R$ be such that
\begin{subequations}
\label{hyp:phi}
\begin{eqnarray}
	&\o \mapsto \phi(\xi,\o)   & \mbox{is $\mathcal F$-measurable }\forall\, \xi\in \rm\!,\label{hyp:meas}\\
	&\xi \mapsto \phi(\xi,\o) 		&\mbox{is convex for $\mu$-a.e.}\ \o \in \O,\qquad \quad\label{hyp:conv}\\
	& -c_0 +c_1|\xi|^p \leq \phi(\xi,\o) \leq c_0 + c_2|\xi|^p\quad & \mbox{for $\mu$-a.e.}\, \o \in \O, \forall\, \xi\in \rm\!.\label{hyp:growth}
\end{eqnarray}	
\end{subequations}
Define
$$ \phi_\n(\xi,x,\o):= \phi(\xi,T_{x/\n}\o),$$
then
\begin{itemize}
	\item[(i)] for every $\n>0$ there exists a couple $(\un,\sn)$ satisfying
$$ \un(\cdot,\o)\in W^{1,p}_0(Q),\qquad \sn(\cdot,\o) \in L^{p'}(Q)\qquad \mbox{for $\mu$-a.e. }\o\in \O,$$
\begin{equation}
\label{eq:problema}
\begin{array}{rlcl}
	-\textup{div}\, \sn\!\!\! & = f^\n& &\mbox{in }{W}^{-1,p'}(Q),\ \mbox{for $\mu$-a.e. } \o \in \O,\\
	\sn(x,\o) \!\!\!& \in \partial \phi_\n (\nabla \un(x,\o),x,\o)& &\mbox{a.e. in }Q \times \O,\\
	\un(x,\o)\!\!\! &= 0& &\mbox{a.e. on }\partial Q \times \O.
\end{array}
\end{equation}	
	\item[(ii)] For $\mu$-a.e. $\o \in \O$ the family of solutions $(\un(\cdot,\o),\sn(\cdot,\o))_{\n>0}$ is weakly compact in $W^{1,p}_0(Q)\times L^{p'}(Q)$. 	
	\item[(iii)] For $\mu$-a.e. $\o\in\O$, for every subsequence $(\n_k)_{k \in \N}$ such that 
\begin{align*}
	 \s^{\n_k} \weakto \s^*\qquad \mbox{weakly in }L^{p'}(Q),\\
	u^{\n_k} \weakto u^*\qquad \mbox{weakly in }W^{1,p}(Q),
\end{align*}	
the limit point $(\s^*, u^*)$ is a solution of
\begin{align}
	-\textup{div}\, \s^* &= f& &\mbox{in }Q, \nonumber\\
	\s^*&\in \partial \phi_0( \nabla u^*)& &\mbox{in }Q,\label{eq:inclusionzero}\\
	u^* &= 0& &\mbox{on }\partial Q.\nonumber
\end{align}	
\end{itemize}
The homogenized function $\phi_0:\rm \to \R$ is given by
\begin{equation}
\label{eq:phizero}
	\phi_0(\xi) 
			:=\inf \left\{ \int_\O \phi\big(\xi + u(\o),\o\big)\, d\mu:\quad u\in \lpot,\ \int_\O u\, d\mu =0\right\}.
\end{equation}
\end{theo}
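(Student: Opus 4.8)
The plan is to prove the three assertions in order, with the main work being the identification of the limit inclusion in part (iii). For part (i), existence of $(\un,\sn)$ for fixed $\n$ is classical: the functional $v \mapsto \int_Q \phi_\n(\nabla v,x,\o)\,dx - \langle f^\n, v\rangle$ is convex, coercive by the lower $p$-growth bound \pref{hyp:growth}, and weakly lower semicontinuous on $W^{1,p}_0(Q)$, so the direct method yields a minimizer $\un$; the Euler--Lagrange inclusion then provides $\sn \in \partial\phi_\n(\nabla\un,\cdot,\o)$ with $-\div\,\sn = f^\n$. Measurability in $\o$ follows from \pref{hyp:meas} together with a measurable selection argument. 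For part (ii), I would test the inclusion against $\un$ itself: maximal monotonicity and the growth bounds give an a priori estimate $\|\nabla\un\|_{L^p}+\|\sn\|_{L^{p'}} \leq C$ uniformly in $\n$ (using $f^\n \to f$ to bound the right-hand side), whence weak compactness in the reflexive space $W^{1,p}_0(Q)\times L^{p'}(Q)$.

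The heart of the proof is part (iii). After extracting a weakly convergent subsequence with limits $(u^*,\s^*)$, passing the equation $-\div\,\sn = f^\n$ to the limit is immediate by linearity and $f^\n\to f$, giving $-\div\,\s^* = f$. The genuine difficulty is identifying the limit relation $\s^* \in \partial\phi_0(\nabla u^*)$, because the inclusion $\sn \in \partial\phi_\n(\nabla\un)$ is nonlinear and the weak limits of $\nabla\un$ and $\sn$ need not satisfy the limit of the pointwise relation. My strategy is to combine Birkhoff's ergodic theorem with the scale-integration idea attributed to Visintin and the maximal monotonicity of $\partial\phi$. First I would recast the inclusion $\sn\in\partial\phi_\n(\nabla\un)$ in its equivalent variational (Fenchel) form $\phi_\n(\nabla\un,x,\o) + \phi_\n^*(\sn,x,\o) = \sn\cdot\nabla\un$, where $\phi_\n^*$ is the convex conjugate, and similarly characterize $\partial\phi_0$ via the conjugate $\phi_0^*$. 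The cell-formula \pref{eq:phizero} for $\phi_0$ should be matched by a dual cell-formula for $\phi_0^*$ expressed over $\lpsol$.

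The key analytical step is a \emph{compensated-compactness / div-curl} argument: the products $\sn\cdot\nabla\un$ must be controlled in the limit. Since $\sn$ is divergence-controlled ($-\div\,\sn = f^\n$ converges strongly) and $\nabla\un$ is a gradient (hence curl-free), the div-curl lemma lets me pass $\int_Q \sn\cdot\nabla\un\,\psi\,dx \to \int_Q \s^*\cdot\nabla u^*\,\psi\,dx$ for test functions $\psi$. In parallel, Birkhoff's theorem applied to the stationary fields $\phi(\cdot,T_{x/\n}\o)$ and $\phi^*(\cdot,T_{x/\n}\o)$ yields, for $\mu$-a.e.\ $\o$, the convergence of the spatial averages to the expectations that define $\phi_0$ and $\phi_0^*$. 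The scale-integration technique then provides the crucial \emph{decoupled} lower bounds: testing against suitable oscillating correctors built from the cell problems, one obtains
\begin{align*}
	\liminf_\n \int_Q \phi_\n(\nabla\un)\,dx &\geq \int_Q \phi_0(\nabla u^*)\,dx,\\
	\liminf_\n \int_Q \phi_\n^*(\sn)\,dx &\geq \int_Q \phi_0^*(\s^*)\,dx.
\end{align*}
Adding these two inequalities and comparing with the limit of the Fenchel equality $\int_Q(\phi_\n+\phi_\n^*)\,dx = \int_Q \sn\cdot\nabla\un\,dx \to \int_Q \s^*\cdot\nabla u^*\,dx$ forces equality throughout, i.e.\ $\phi_0(\nabla u^*) + \phi_0^*(\s^*) = \s^*\cdot\nabla u^*$ a.e., which is exactly $\s^*\in\partial\phi_0(\nabla u^*)$. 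I expect the main obstacle to be the construction of the correctors realizing the infimum in \pref{eq:phizero} together with verifying that the scale-integration produces the two $\liminf$ inequalities above with the \emph{correct} effective integrands $\phi_0,\phi_0^*$; reconciling the primal cell-formula with its dual and ensuring the admissible classes $\lpot$ and $\lpsol$ are matched under ergodic averaging is where the delicate part of the argument lies.
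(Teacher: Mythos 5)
Parts (i) and (ii) of your proposal match the paper's argument: the direct method for each fixed $\n$ and $\o$, and the a priori bound obtained from Fenchel's equality $\int_Q \phi_\n(\nabla\un)+\phi_\n^*(\sn)\,dx=\langle f^\n,\un\rangle$ together with the growth of $\phi$ and $\phi^*$. The passage to the limit in the linear equation is also as in the paper. The problem is the core of part (iii).

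There is a genuine gap in your identification of the limit inclusion: the two decoupled lower bounds
\begin{equation*}
	\liminf_\n \int_Q \phi_\n(\nabla\un)\,dx \geq \int_Q \phi_0(\nabla u^*)\,dx,
	\qquad
	\liminf_\n \int_Q \phi_\n^*(\sn)\,dx \geq \int_Q \phi_0^*(\s^*)\,dx,
\end{equation*}
are asserted but not proved, and they are not what scale integration delivers. These inequalities are precisely the $\Gamma$-liminf (epiconvergence) inequalities for the random functionals $u\mapsto\int_Q\phi_\n(\nabla u)\,dx$ and their duals, i.e.\ the content of the Dal Maso--Modica / Messaoudi--Michaille machinery that this paper is explicitly written to avoid; proving them along an \emph{arbitrary} weakly convergent sequence $\nabla\un\weakto\nabla u^*$ is a substantial theorem in its own right, not a consequence of Birkhoff's theorem applied to $\phi(\cdot,T_{x/\n}\o)$ (Birkhoff gives convergence of averages of a \emph{fixed} stationary field, not lower semicontinuity of $\int\phi_\n(\cdot)$ along oscillating arguments). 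Scale integration, as used in the paper, does something much weaker and more localized: for the solution $(v,z)$ of the cell problem $v\in\vpot$, $z\in\lpsol$, $z\in\partial\phi(\xi+v)$, it yields the single relation $E(z)\in\partial\phi_0(\xi)$ (via the dual cell formula $\psi_0$ and Fenchel--Rockafellar duality between $\vpot$ and $\lpsol$). The paper then never needs your liminf bounds: it writes the \emph{monotonicity} inequality $(\sn-z_\n)\cdot(\nabla\un-\xi-v_\n)\geq 0$ with the oscillating correctors $v_\n(x)=v(T_{x/\n}\o)$, $z_\n(x)=z(T_{x/\n}\o)$, passes to the limit in this product by the div-curl lemma (both factors have controlled div/curl) and Birkhoff's theorem ($z_\n\weakto E(z)$, $v_\n\weakto 0$), obtaining $(\s^*-E(z))\cdot(\nabla u^*-\xi)\geq 0$ for every $\xi\in\rm$, and concludes $\s^*\in\partial\phi_0(\nabla u^*)$ by \emph{maximal monotonicity} of $\partial\phi_0$. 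To repair your argument you would either have to supply the two $\Gamma$-liminf inequalities (thereby reproducing the epiconvergence proof), or replace the energy-comparison step by the monotonicity-plus-div-curl argument above.
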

(See Section \ref{sec:preweyl} for the definition of $\lpot$.)

\subsection{Literature}
We remark that the results of the present paper were already found by Messaoudi and Michaille in \cite{MessaoudiMichaille91, MessaoudiMichaille94}, using the method of epiconvergence of integral functionals, in the case of nonconvex integrands with growth $p\!>\!1$. This result was then generalized by Abddaimi, Michaille, and Licht \cite{AbddaimiMichailleLicht97}, to the case of quasiconvex integrands  with linear growth. As we detail in the next subsection, the main interest of the present paper is then to recover the known results with a new, simpler technique, which does not involve the convergence properties of integral functionals. In this section we review the main literature concerning stochastic homogenization.

The subject of homogenization has been widely studied since the early works of Babu\v{s}ka \cite{Babuska76}, De Giorgi and Spagnolo \cite{DegiorgiSpagnolo73}, Tartar \cite{Tartar74, Tartar77}, Bensoussan, Lions, and Papanicolau \cite{BensLionsPapa78},  Sanchez-Palencia \cite{SanchezPalencia80}. Restricting ourselves to the case of  stochastic homogenization (or \textit{homogenization in stationary ergodic media}), we recall that Kozlov \cite{Kozlov79} and Papanicolau and Varadhan \cite{Varadhan81, PapaVaradhan82} where the first to study second order linear elliptic PDEs. $\Gamma$-convergence techniques for the study of random integral functionals were employed by Dal Maso and Modica in \cite{DalmasoModica86b, DalmasoModica86a}. In the monograph \cite{JikovKozlovOleinik94} Jikov, Kozlov, and Oleinik collect results regarding (among the other topics) homogenization of linear elliptic problems, elliptic problems in perforated random domains and homogenization of random lagrangians with nonstandard growth conditions via $\Gamma$-convergence, with applications to elasticity. 

More recently, the mathematical analysis community showed a renewed interest in stochastic homogenization of different nonlinear problems. As examples we cite the papers on parabolic operators \cite{Svanstedt07, Svanstedt08}, the works on Hamilton-Jacobi equations \cite{LionsSouganidisTA}, on fully nonlinear elliptic equations \cite{CaffarelliSouganidisWang05, CaffarelliSouganidis10}, one-dimensional plasticity \cite{Schweizer09}, and on variance estimates for the homogenized coefficients of discrete elliptic equations \cite{GloriaOttoTA}. We also mention a first extension of Nguetseng's two-scale convergence \cite{Nguetseng89} to the stochastic setting \cite{BourgeatMikelicWright94}.

We note that problem \pref{eq:problema} was studied in \cite{Damlamianetal08} in the setting of periodic homogenization by means of Mosco-convergence and periodic unfolding. In the proof we employ a different method of homogenization but we adopt the same variational setting and convex analysis tools.

\subsection{Discussion}

As we noted above, the form of the homogenized function $\phi_0$ is already well-known in the literature. The real interest of this paper lies rather in a new proof which is direct, in the sense that it does not make use of $\Gamma$-convergence, $G$-convergence, epiconvergence, or \textit{two-scale} convergence, showing instead simultaneous convergence of the two components of the solution $(\sn,\un)$. The proof is also completely self-contained, taken into account that some probability tools like ergodicity (sometimes traded for \textit{independence at large distances}, as in \cite{DalmasoModica86a}) are intrinsecally part of the problem. Finally, we apply to the stochastic setting a new idea of homogenization introduced by Visintin in \cite{Visintin09} for the periodic setting (under more general hypothesis). This method, named ``scale integration," deals with the integration with respect to $y$ of the relation
$$ w(x,y) \in \partial \vfi\big(v(x,y),x,y\big)\qquad \mbox{on }Q \times Y,$$ 
where $\partial \vfi(\cdot,x,y)$ is a cyclically monotone mapping on a separable real Banach space  $X$, $Q\subset \rm$, and $Y$ is the $m$-dimensional unit torus. Relying on the splitting of the space-regularity that is at the basis of compensated compactness, and on the properties of Fenchel-Legendre transform, the method yields
$$ \int_Y w(x,y)\,dy \in \partial \vfi_0 \left(\int_Y v(x,y)\, dy ,x\right)\qquad \mbox{on }Q,$$ 
where $\vfi_0$ is the solution of a minimization problem in a suitable subspace $V\subset X$
\begin{equation*}
	\vfi_0(\xi,x) :=\inf_{u \in V} \int_Y \vfi\big(\xi + u(y),x,y\big)\, dy.
\end{equation*}

\subsubsection{Description of the proof}
The main ideas underlying the proof of Theorem \ref{th:nonlinear} can be summarized as follows 
\begin{itemize}
	\item We study the auxiliary problem  
	$$ -\div\, s=0,\qquad s \in \partial \phi (\nabla w),$$
	set in the probability space only. Weyl's decomposition of $L^2(\O)$ spaces into potential and solenoidal field provides the correct functional setting for the variational formulation and allows for a rigorous definition of the derivatives of $s$ and $w$. Convexity of $\phi$ ensures the existence of a solution.
	\item By Birkhoff's ergodic Theorem \ref{th:birkhoff}, for a.e. $\o \in \O$ the realizations 
	$$s_\n(x)=s(T_{x/\n}\o),\quad \nabla w_\n(x)=\nabla w(T_{x/\n}\o)$$ 
	converge to their expected values 
	$$E(s)=\int_\O s\, d\mu,\quad E( \nabla w)=\int_\O \nabla w\, d\mu.$$
	\item Exploiting the scale integration method, we can relate the expected values by the \textit{homogenized function} $\phi_0$ defined in \pref{eq:phizero}
	$$ E(s)\in \partial \phi_0(E(\nabla w)).$$
	\item By compensated compactness (see Lemma \ref{lemma:divcurl}) we can pass to the limit in the monotonicity inequality
	$$ (\sn-s_\n,\nabla \un - \nabla w_\n)\geq 0$$
	and recover \pref{eq:inclusionzero} by maximal monotonicity of $\partial \phi_0$.
\end{itemize}

\subsubsection{Further questions}
We remark that in some of the steps above we use tools that allow for more general hypothesis than the ones assumed in this work. It would be interesting to understand if the scale integration method could be applied to recover the same general results which can be obtained via $\Gamma$- or epiconvergence, and if it could yield new results. We present a few examples of directions of extension.

\paragraph{Growth $p=1$.} All the properties which rely only on convexity and monotonicity are still valid, but since $L^1$ is not reflexive, problems will arise, e.g., from the lack of compactness of $W^{1,1}$, which leads to a setting in spaces of bounded variation. This problem was solved via epiconvergence, see \cite{AbddaimiMichailleLicht97}.

\paragraph{Nonconvex $\phi$.} The scale integration method does not require the integrand $\phi$ to be convex, so in principle it could be applied also in this case. A number of questions arise nonetheless, starting from the lack of lower-semicontinuity for the related integral functional. See the results in \cite{MessaoudiMichaille91, MessaoudiMichaille94}.

\paragraph{Time-dependent problems.}
One of the main physical applications of equations \pref{eq:problema} is to problems of nonlinear viscosity like
\begin{equation*}
\begin{array}{rlcl}
	-\textup{div}\, \s\!\!\! & = f,\\
	 \partial_t (\nabla^s u)\!\!\!& \in \partial \phi \big(\s\big).
\end{array}
\end{equation*}	
Since scale integration applies also to time-dependent problems, we hope to be able to use the present result as a starting point for the study of this kind of problems in the future. A similar use of an auxiliary problem and oscillating test function method was used, in the case of periodic homogenization, in \cite{SchweizerVeneroniTA}.

\subsubsection{Plan of the paper}
The paper is organized as follows. Section \ref{sec:preliminaries} contains all the notation, definitions, and the main results regarding ergodicity, Weyl's decomposition, compensated compactness, and convex analysis. The proof of Theorem \ref{th:nonlinear} is given in Section \ref{sec:proof}.

\section{Preliminaries}
\label{sec:preliminaries}

Notation: for all open sets $E \subset \rm$, let $\D(E)$ be the set of all infinitely differentiable functions with compact support in $E$, and let $\D'(E)$ be its dual space, that is, the distributions on $E$.

\subsection{Ergodicity}
\label{sec:prelergo}
For this part of classical theory we follow the exposition in \cite[Section 7]{JikovKozlovOleinik94}, to which we refer for further details and examples.

Let $(\O,\mathcal F,\mu)$ be a probability space, with $\s$-algebra $\mathcal F$ and probability measure $\mu$. An \textit{m-dimensional dynamical system} is a family of mappings
$$ T_x : \O \to \O\qquad \forall\,x\in \R^m,$$
satisfying
\begin{enumerate}
	\item \textit{Group property}. $T_0= I$ (where $I$ is the identity),
	$$ T_{(x+y)}=T_x\,T_y\qquad \forall\, x,y \in \R^m.$$
	\item \textit{Mass invariance}. The mappings $T_x: \O \to \O$ preserve the measure $\mu$ on $\O$, that is, for every $x\in \R^m$ and every measurable set $E \in \mathcal F$ we have
	$$ T_x E\in \mathcal F,\qquad \mu(T_x E)=\mu(E).$$
	\item \textit{Measurability}. For any measurable function $f: \O \to \R^n$, the function $\tilde f:\R^m \times \O  \to \R^n$ defined as  $\tilde f(x,\o):=f(T_x \o)$ is also measurable.
\end{enumerate}	 
A \textit{random field on} $\R^m$ is a mapping $\xi:\R^m \times \O \to \R^n$ which associates to every $x\in \R^m$ a random variable $\xi(x,\cdot)$ with values in $\R^n$. A random field is said to be \textit{stationary} if for any finite set $x_1,\ldots,x_k \in \R^m$ and any $h \in \R^m$ the distribution of the random vector
$$ \xi(x_1+h),\ldots,\xi(x_k +h) $$
does not depend on $h$. In particular, $\xi$ is stationary if it can be represented in the form
\begin{equation}
\label{def:stationary}
	\xi(x,\o)=f(T_x\o),
\end{equation}	
where $f$ is a fixed random variable on $\O$, and $T$ is a dynamical system. The converse statement is analyzed, e.g., in \cite{Doob90}.

Let a dynamical system $T$ be given, a measurable function $f$ defined on $\O$ is said to be \textit{invariant} if 
$$ f(T_x \o) = f(\o)\qquad \forall\,(x,\o) \in \R^m,$$
almost surely in $\O$. A dynamical system $T$ is said to be \textit{ergodic} if every invariant function is constant almost surely in $\O$.

Let $\o \mapsto f(\o)$ be a measurable function on $\O$. For a fixed $\o \in \O$ the function $x \mapsto f(T_x\o)$ is said to be a \textit{realization of $f$}. We often denote the realization of $f$ by $\tilde f(\cdot)$.
\medskip

Let $p\geq 1$, then we denote by $L^p(\O)$ the usual space formed by the equivalence classes of $\mathcal F$-measurable functions $f:\O \to \rm$, such that
$$ \int_\O |f(\o)|^p\, d\mu(\o) < +\infty,$$
and by $L^\infty(\O)$ the space of measurable essentially bounded functions.

\begin{lemma}
\label{lemma:lp}
The following properties hold:
\begin{itemize}
	\item if $f \in L^p(\O)$, $p \in [1,+\infty[$, then $\mu$-almost all realizations of $f$ belong to $L^p_{loc}(\rm)^m$.
	\item if $f_n \to f$ strongly in $L^p(\O)$, then there exists a subsequence $(f_{n_k})_{k \in \N}$ such that for $\mu$-a.e. $\o\in \O$ we have $\tilde f_{n_k} \to \tilde f $ strongly in $L^p_{loc}(\rm)^m$, where $\tilde f_{n_k} (x) =f_{n_k}(T_x \o),$ $\tilde f(x)= f(T_x \o)$.
\end{itemize}	
\end{lemma}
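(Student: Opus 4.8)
The plan is to reduce both statements to a single Fubini--Tonelli computation combined with the mass invariance of the dynamical system. Throughout I fix an exhaustion of $\rm$ by closed balls $K_j := \overline{B(0,j)}$, $j \in \N$, so that $\bigcup_j K_j = \rm$ and membership in $L^p_{loc}(\rm)^m$ is equivalent to membership in $L^p(K_j)^m$ for every $j$.

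For the first statement, I use the joint measurability of $(x,\o) \mapsto f(T_x\o)$ granted by property 3 of the dynamical system, so that Tonelli's theorem applies to the nonnegative integrand $|f(T_x\o)|^p$ and yields, for each fixed $j$,
\[
\int_\O \int_{K_j} |f(T_x\o)|^p \, dx\, d\mu(\o) = \int_{K_j} \left( \int_\O |f(T_x\o)|^p \, d\mu(\o)\right) dx.
\]
By mass invariance (property 2) the map $T_x$ preserves $\mu$, so for every fixed $x$ the inner integral equals $\int_\O |f(\o)|^p\,d\mu(\o) = \|f\|_{L^p(\O)}^p$; hence the double integral equals $|K_j|\,\|f\|_{L^p(\O)}^p < +\infty$. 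Finiteness forces $\o \mapsto \int_{K_j}|\tilde f(x)|^p\,dx$ to be finite for $\mu$-a.e.\ $\o$, on a full-measure set $\O_j$. On $\O_0 := \bigcap_{j\in\N}\O_j$, which again has full measure, every realization $\tilde f$ lies in $L^p(K_j)^m$ for all $j$, hence in $L^p_{loc}(\rm)^m$.

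For the second statement I apply the same computation to the differences $g_n := f_n - f \in L^p(\O)$. Since $\|g_n\|_{L^p(\O)} \to 0$, I extract a subsequence $(n_k)$ with $\|g_{n_k}\|_{L^p(\O)}^p \le 2^{-k}$. Writing $\Phi_k^{(j)}(\o) := \int_{K_j} |\tilde g_{n_k}(x)|^p\,dx$, the computation above gives $\int_\O \Phi_k^{(j)}\,d\mu = |K_j|\,\|g_{n_k}\|_{L^p(\O)}^p \le |K_j|\,2^{-k}$. Summing over $k$ and invoking monotone convergence for the nonnegative series, $\int_\O \sum_k \Phi_k^{(j)}\,d\mu \le |K_j|\sum_k 2^{-k} < +\infty$, so $\sum_k \Phi_k^{(j)}(\o) < +\infty$, and in particular $\Phi_k^{(j)}(\o)\to 0$, for $\o$ in a full-measure set $\O_j'$. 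The key point is that $(n_k)$ was chosen using only the $L^p(\O)$-decay, independently of $j$, so no diagonal extraction is needed: on $\O_0' := \bigcap_j \O_j'$ one has $\tilde g_{n_k}\to 0$ in $L^p(K_j)^m$ for every $j$ simultaneously, that is $\tilde f_{n_k} \to \tilde f$ in $L^p_{loc}(\rm)^m$.

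The only delicate point, and the one I would state carefully, is the joint measurability needed to apply Tonelli and to make $\o \mapsto \int_{K_j}|\tilde f(x)|^p\,dx$ a legitimate measurable function; this is exactly property 3 in the definition of the dynamical system, so the obstacle here is bookkeeping rather than analysis. Everything else reduces to the identity $\int_\O |f(T_x\o)|^p\,d\mu = \|f\|_{L^p(\O)}^p$ coming from mass invariance, together with the countable exhaustion that upgrades ``for each $K_j$'' into ``locally.''
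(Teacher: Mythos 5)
Your proof is correct and follows essentially the same route as the paper's: the identity $\int_\O |g(T_x\o)|^p\,d\mu=\|g\|_{L^p(\O)}^p$ from mass invariance, Fubini--Tonelli over $K\times\O$, and extraction of an a.e.-convergent subsequence from the $L^1(\O)$-decay of $\o\mapsto\int_K|\tilde g_n(x)|^p\,dx$. Your explicit countable exhaustion and your observation that the subsequence is chosen independently of $K$ (so no diagonal argument is needed) are in fact slightly more careful than the paper, which works with a single fixed $K\subset\subset\rm$ and leaves the upgrade to $L^p_{loc}$ implicit.
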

\begin{proof}
Using the measure-preserving property of $T$, for all $x\in \rm$ 
\begin{align*}
    \|f_n - f\|^p_{L^p(\O)}&= \int_\O |f_n(\o)-f(\o)|^p\, d\mu(\o)\\
        &=\int_\O |f_n(T_x\o)-f(T_x\o)|^p\, d\mu(\o).
\end{align*}
Let $K \subset \subset \rm$, then by the measurability property of $T$ and Fubini's theorem
\begin{align*}
    \|f_n - f\|^p_{L^p(\O)}    &= \frac{1}{|K|}\int_K\int_\O |f_n(T_x\o)-f(T_x\o)|^p\, d\mu(\o)\,dx\\
        &= \frac{1}{|K|}\int_\O \int_K|f_n(T_x\o)-f(T_x\o)|^p\,dx\, d\mu(\o).
\end{align*}        
This implies that $\mu$-almost all realizations of $f$ belong to $L^p_{loc}(\rm)^m$, and since $\|f_n - f\|^p_{L^p(\O)} \to 0$, we can find a subsequence $n_k$ such that 
$$F_{n_k}(\o):=\int_K|f_{n_k}(T_x\o)-f(T_x\o)|\,dx$$
converges to zero $\mu$-almost everywhere.
\end{proof}

Let $f\in L^1_{loc}(\R^m)^m.$ A number $M\{f\}$ is called the \textit{mean value} of $f$ if
$$ \lim_{\n \to 0} \frac{1}{|K|} \int_K f\left( \frac x\n \right)\, dx = M\{f\}$$	
for any measurable bounded set $K \subset \R^m$. Denoting $ K_\n:=\{x \in \R^m: \n x \in K\},$
the mean value of $f$ can also be written, after rescaling, as
$$ \lim_{\n \to 0 } \frac{\n^m}{|K|} \int_{K_\n} f (x)\, dx = M\{f\}.$$	
The \textit{expected value} of a random variable $f$ on $\O$ is defined as
$$ E(f):=\int_\O f(\o)\, d\mu.$$

\begin{theo}\textup{(Birkhoff's Ergodic Theorem, \cite[Lemma 15.1]{JikovKozlovOleinik94})}
\label{th:birkhoff}
Let $f\in L^1(\O)$. Then, for a.e. $\o \in \O$, the mean value of the realization $f(T_x\o)$ exists and it satisfies
$$ \int_\O M\{f(T_x\o)\}\, d\mu=E(f) .$$
If the system $T$ is ergodic, then the mean value does not depend on $\o$ almost surely and it satisfies
$$ M\{f(T_x \o)\} = E(f)\qquad \mbox{for $\mu$-a.e.\,}\o\in\O.$$
\end{theo}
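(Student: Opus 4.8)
The plan is to recognize the statement as the multidimensional, continuous-parameter Birkhoff--Wiener ergodic theorem and to prove it through the three classical ingredients: a maximal inequality, von Neumann's mean ($L^2$) ergodic theorem to identify the candidate limit, and the Banach principle to upgrade mean convergence to almost-everywhere convergence. First I would rewrite the mean value as a spatial average. With $K_\n=\n^{-1}K$ one has $|K_\n|=\n^{-m}|K|$, so after the change of variables $y=x/\n$, writing $\tilde f(x)=f(T_x\o)$,
$$ \frac{1}{|K|}\int_K \tilde f\Big(\frac{x}{\n}\Big)\,dx=\frac{\n^m}{|K|}\int_{K_\n}f(T_y\o)\,dy=\frac{1}{|K_\n|}\int_{K_\n}f(T_y\o)\,dy=:A_{K_\n}f(\o),$$
so it suffices to show that the averages $A_{K_\n}f(\o)$ converge for $\mu$-a.e.\ $\o$ to a limit that does not depend on the shape of $K$.

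The engine is the maximal ergodic inequality: the maximal function
$$ f^{*}(\o):=\sup_{R>0}\frac{1}{|B_R|}\int_{B_R}|f(T_y\o)|\,dy$$
satisfies the weak-$(1,1)$ bound $\mu(\{f^{*}>\lambda\})\le C\lambda^{-1}\|f\|_{L^1(\O)}$. I would obtain it by transference from the Hardy--Littlewood--Wiener maximal inequality on $\R^m$: freezing $\o$, applying the Euclidean inequality to the realization $y\mapsto f(T_y\o)$ on a large ball, integrating in $\o$, and using the mass invariance of $T$ to absorb the translation. In parallel, the maps $U_y:g\mapsto g\circ T_y$ form a strongly continuous unitary group on $L^2(\O)$, and von Neumann's mean ergodic theorem gives strong $L^2$-convergence of $A_{B_R}f$ as $R\to\infty$ to the orthogonal projection $Pf$ onto the invariant subspace $\{g:U_yg=g\ \forall\,y\}$; this projection is exactly the conditional expectation $E(f\,|\,\mathcal I)$ with respect to the $\s$-algebra $\mathcal I$ of invariant sets.

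To pass from mean to almost-everywhere convergence I would invoke the Banach principle. Writing $\mathcal Of(\o):=\limsup_{\n,\n'\to 0}|A_{K_\n}f(\o)-A_{K_{\n'}}f(\o)|$ for the oscillation, the maximal inequality gives $\mathcal Of\le 2f^{*}$, so $\mathcal O$ is dominated by a weak-$(1,1)$ operator and the set $\{f\in L^1(\O):\mathcal Of=0\ \text{a.e.}\}$ is closed in $L^1(\O)$. It then suffices to produce a dense subclass on which the oscillation vanishes. Invariant functions $g$ give $A_{K_\n}g=g$, hence $\mathcal Og=0$; for a bounded coboundary $h-U_a h$ the two averages differ only by a boundary term, $|A_{K_\n}(h-U_a h)|\le\|h\|_\infty\,|K_\n\triangle(K_\n+a)|/|K_\n|\to0$, provided $K_\n$ is a regular expanding family (van Hove condition), for which the symmetric difference is of lower order than $|K_\n|$. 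By the orthogonal decomposition underlying the mean ergodic theorem, the invariant functions together with the span of such coboundaries are dense in $L^2(\O)$, and hence in $L^1(\O)$. This yields $\mathcal Of=0$ a.e.\ for every $f\in L^1(\O)$, so $A_{K_\n}f\to E(f\,|\,\mathcal I)$ $\mu$-a.e., independently of the shape of $K$; this is precisely the existence of the mean value, with $M\{f(T_x\o)\}=E(f\,|\,\mathcal I)(\o)$.

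Finally I would read off the two assertions. Integrating the pointwise identity and using the defining property of conditional expectation, $\int_\O M\{f(T_x\o)\}\,d\mu=\int_\O E(f\,|\,\mathcal I)\,d\mu=E(f)$, which is the first claim. If $T$ is ergodic then $\mathcal I$ is trivial, every invariant function is constant, and $E(f\,|\,\mathcal I)$ equals the constant $E(f)$; hence $M\{f(T_x\o)\}=E(f)$ for $\mu$-a.e.\ $\o$, the second claim. I expect the genuine obstacle to be the almost-everywhere (as opposed to merely mean) convergence: von Neumann's theorem is soft Hilbert-space analysis, but the real content lies in the maximal inequality and its transference to the abstract probability space, and in checking that general expanding sets $K_\n$ --- not just balls --- form a regular family, so that the boundary contributions are negligible and the limit is shape-independent.
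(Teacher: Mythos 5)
You are proving a statement that the paper itself does not prove: Theorem \ref{th:birkhoff} is imported verbatim from \cite[Lemma 15.1]{JikovKozlovOleinik94}, so there is no in-paper argument to compare against, and your proposal must stand on its own. On its own terms it follows the canonical Wiener route --- transference maximal inequality, von Neumann's mean ergodic theorem identifying the candidate limit as the conditional expectation $E(f\,|\,\mathcal I)$ on the invariant $\s$-algebra, Banach principle with invariant functions plus bounded coboundaries as the dense class, and triviality of $\mathcal I$ in the ergodic case --- and the architecture is sound. Two of your own worries in fact dissolve. First, the van Hove condition is automatic for the dilations appearing here: since $K_\n=\n^{-1}K$, one has $|K_\n\triangle(K_\n+a)|/|K_\n|=|K\triangle(K+\n a)|/|K|\to 0$ by continuity of translations in $L^1(\R^m)$, so no geometric regularity of $K$ is needed. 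Second, the oscillation bound should read $\mathcal Of\le 2C_K f^{*}$ with $C_K=|B_{R_0}|/|K|$ for any ball $B_{R_0}\supset K$, rather than $\mathcal Of\le 2f^{*}$; the $K$-dependent constant is harmless for the Banach principle since $K$ is fixed at that stage. (The strong continuity of $U_y$ on $L^2(\O)$, which you assert, does require a word: it follows from the measurability axiom of the dynamical system, and is itself part of the standard theory in \cite{JikovKozlovOleinik94}.)

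The one genuine gap is a quantifier. The mean value is \emph{defined} by requiring the limit for every measurable bounded $K$, and the theorem asserts its existence off a \emph{single} null set, whereas your argument produces a null set $N_K$ for each fixed $K$ --- an uncountable family, whose union you cannot take. The obvious repair (run the argument over the countable algebra generated by rational boxes and approximate a general $K$) stalls as stated, because an arbitrary measurable set cannot be sandwiched between algebra sets in the containment sense needed to control the error terms. A clean fix: by de la Vall\'ee-Poussin choose a superlinear $\Phi$ with $\int_\O\Phi(|f|)\,d\mu<\infty$ and run your proof for rational boxes simultaneously for $f$ and for $\Phi(|f|)$. Off one null set this gives, on each fixed box $B_0$, $\sup_\n\int_{B_0}\Phi\big(|f(T_{x/\n}\o)|\big)\,dx<\infty$, i.e.\ local equi-integrability of the realizations $\tilde f_\n$; then for any bounded measurable $K$, picking an algebra set $W$ with $|K\triangle W|$ small makes $\int_{K\triangle W}|\tilde f_\n|\,dx$ small uniformly in $\n$, so the convergence of averages transfers from the countable algebra to every $K$ at once, with limit $E(f\,|\,\mathcal I)(\o)$ independent of the shape of $K$ --- equivalently, $\tilde f_\n\weakto E(f\,|\,\mathcal I)(\o)$ weakly in $L^1(B_0)$, which is precisely the content of the Remark following the theorem in the paper. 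With that insertion your proof is complete, and the two displayed identities follow, as you say, from $\int_\O E(f\,|\,\mathcal I)\,d\mu=E(f)$ and from the triviality of $\mathcal I$ under ergodicity.
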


\begin{rem}
Birkhoff's theorem tells us that almost every realization $\tilde f_\n(x)=f(T_{x/\n}\o)$ satisfies
$$ \lim_{\n \to 0} \dashint_K \tilde f_\n(x)\, dx = E(f),$$
and it is important to notice that this holds for every measurable bounded set $K \subset \rm$. This entails in particular that if $f\in L^p(\O)$, then
$$ \tilde f_\n \weakto E(f)\qquad \mbox{weakly in }L^p_{loc}(\rm)^m.$$
\end{rem}

\subsection{Div-curl Lemma}
Compensated compactness is the fundamental idea for passing to the limit as $\n \to 0$ in a product of weakly converging functions $\sn,\en$.

If $\s,\e\in L^2(Q)^m,$ and $\{\sn\}$, $\{\en\}$ are two sequences of vector fields in $L^2(Q)^m$ such that
$$ \sn \weakto \s,\qquad \en \weakto \e\qquad \mbox{weakly in }L^2(Q)^m,$$
it is in general false that
$$ \sn \cdot \en \weakto \s \cdot \e,$$
in any sense. Consider, for example, $\sn(x)=\en(x)=\sin(x/\n)$ in $L^2(0,1)$. The theory of compensated compactness, introduced by L. Tartar and F. Murat in \cite{Tartar77, Murat78, Tartar79}, gives general conditions on the derivatives of $\sn,\en$, and on a function $F$ in order to conclude that
$$ F(\sn, \en) \weakto F(\s, \e)$$
in the sense of distributions. The following statement is one of the consequences of compensated compactness theory.
\begin{lemma}[Div-Curl lemma, \cite{Murat81}]
\label{lemma:divcurl}
Let $p\in (1,+\infty)$, $p':=p/(p-1)$. Let $\sn,\s \in L^{p'}(Q)^m$ and $\en,\e \in L^p(Q)^m$  such that
$$ \sn \weakto \s\quad \mbox{weakly in }L^{p'}(Q)^m, \qquad \en \weakto \e\quad \mbox{weakly in }L^p(Q)^m.$$
In addition, let $f\in W^{-1,p'}(Q)$ and assume that
\begin{align*}
	&\textup{curl}\, \en=0,\\
	&\textup{div}\, \sn\to  f\qquad \mbox{strongly in }W^{-1,p'}(Q).
\end{align*}
Then	
$$ \sn \cdot \en \stackrel{*}{\weakto} \s \cdot \e\qquad \mbox{in }\D'(Q).$$	
\end{lemma}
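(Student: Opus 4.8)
The plan is to test against an arbitrary $\vfi \in \D(Q)$ and prove that $\int_Q \vfi\, \sn \cdot \en\, dx \to \int_Q \vfi\, \s \cdot \e\, dx$. Since convergence in $\D'(Q)$ is determined by the weak limits $\s,\e$ alone, it suffices to establish this along an arbitrary subsequence, extracting a further subsequence as needed: the standard principle that every subsequence admits a sub-subsequence with the same limit then forces convergence of the full sequence.

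First I would localize. Fix an open ball $\omega$ with $\mathrm{supp}\,\vfi \subset \omega \subset\subset Q$. On the simply connected set $\omega$ the hypothesis $\curl \en = 0$ yields a potential $\un \in W^{1,p}(\omega)$ with $\en = \nabla \un$; after normalizing $\int_\omega \un\, dx = 0$, the Poincar\'e--Wirtinger inequality and the bound on $\|\en\|_{L^p(\omega)}$ make $(\un)$ bounded in $W^{1,p}(\omega)$. By the Rellich--Kondrachov theorem a subsequence satisfies $\un \to u$ strongly in $L^p(\omega)$ and $\un \weakto u$ weakly in $W^{1,p}(\omega)$, and since $\nabla \un = \en \weakto \e$ the limit obeys $\nabla u = \e$ on $\omega$. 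This compactness of the potentials is the heart of the matter: it is precisely what ``compensates'' for the fact that a product of two merely weakly convergent factors need not pass to the limit.

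Next comes the algebraic trade that exploits the differential constraints. Since $\vfi \un \in W^{1,p}_0(Q)$ (extended by zero), I would use the duality pairing
$$ \langle \div\sn,\, \vfi \un\rangle = -\int_Q \sn \cdot \nabla(\vfi \un)\, dx = -\int_Q \vfi\, \sn \cdot \nabla \un\, dx - \int_Q \un\, \sn \cdot \nabla\vfi\, dx,$$
so that, recalling $\en = \nabla \un$,
$$ \int_Q \vfi\, \sn \cdot \en\, dx = -\langle \div\sn,\, \vfi \un\rangle - \int_Q \un\, \sn \cdot \nabla\vfi\, dx.$$
Each term on the right is now a strong--weak pairing. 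In the first, $\div\sn \to f$ strongly in $W^{-1,p'}(Q)$ while $\vfi \un \weakto \vfi u$ weakly in $W^{1,p}_0(Q)$ (multiplication by $\vfi$ being a bounded linear, hence weakly continuous, map), so the pairing tends to $\langle f, \vfi u\rangle$. In the second, $\un \to u$ strongly in $L^p(\omega)$ while $\sn \cdot \nabla\vfi \weakto \s \cdot \nabla\vfi$ weakly in $L^{p'}(Q)$, so the integral tends to $\int_Q u\, \s \cdot \nabla\vfi\, dx$.

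Finally I would identify the limit by undoing the integration by parts. Because $\div$ is weakly continuous from $L^{p'}(Q)^m$ into $W^{-1,p'}(Q)$, the weak limit $\s$ satisfies $\div\s = f$, whence $\langle f, \vfi u\rangle = -\int_Q \s \cdot \nabla(\vfi u)\, dx = -\int_Q \vfi\, \s \cdot \nabla u\, dx - \int_Q u\, \s \cdot \nabla\vfi\, dx$. Substituting this back, the two copies of $\int_Q u\, \s \cdot \nabla\vfi\, dx$ cancel and leave $\int_Q \vfi\, \s \cdot \nabla u\, dx = \int_Q \vfi\, \s \cdot \e\, dx$, as required. The only genuine obstacle is the compactness step; once the potentials are shown to converge strongly in $L^p$, every remaining passage to the limit is an elementary strong--weak argument, and the sole points demanding care are the global existence of the potential on the simply connected $\omega$ and the admissibility of $\vfi \un$ as a test function in $W^{1,p}_0(Q)$.
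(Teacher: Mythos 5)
Your argument is correct. Note, however, that the paper does not prove this lemma at all: it is quoted directly from Murat's work \cite{Murat81}, so there is no in-paper proof to compare against. What you have written is the classical proof of the div--curl lemma: localize to a ball $\omega \supset \mathrm{supp}\,\vfi$, write $\en = \nabla \un$ with a zero-mean potential, gain strong $L^p$ compactness of $(\un)$ via Rellich, and convert $\int_Q \vfi\, \sn\cdot\en\,dx$ into strong--weak pairings by integrating by parts against $\vfi\un \in W^{1,p}_0(Q)$; the identification $\div\,\s = f$ and the cancellation of the $\int u\,\s\cdot\nabla\vfi$ terms then close the argument, and the Urysohn sub-subsequence principle removes the subsequence extraction. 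All the limit passages check out. The only step you should make fully explicit is the one you yourself flag: that a distributionally curl-free field on the ball $\omega$ admits a potential that actually lies in $W^{1,p}(\omega)$ (not merely $W^{1,p}_{\mathrm{loc}}$), which follows since $\omega$ is a bounded extension domain and $\nabla\un = \en \in L^p(\omega)^m$, so the Poincar\'e--Wirtinger bound applies after the zero-mean normalization.
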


\subsection{Weyl's decomposition}
\label{sec:preweyl}

Weyl's decomposition theorem, or more correctly ``Peter-Weyl's" theorem, was originally stated for compact topological groups \cite{PeterWeyl27}. In the context of stochastic homogenization, it is used to provide an orthogonal decomposition of $L^2(\O)$ into functions, the realizations of which are vortex-free and divergence-free, in the sense of distributions  (see, e.g., \cite[Lemma 7.3]{JikovKozlovOleinik94}). For $p \in [1,+\infty[$, Weyl's theorem can be generalized to a relation of orthogonality between subspaces of $L^p(\O)$ and $L^{p'}(\O)$.

Let $v=(v^1,\dots,v^m),$ $v^i\in L^p_{loc}(\rm)$, $i=1,\dots,m$, be a vector field. We say that $v$ is \textit{potential} (or \textit{vortex-free}) in $\rm$ if
\begin{equation}
\label{def:potential}
 	\int_\rm v^i(x)\frac{\partial\vfi(x)}{\partial x_j} - v^j(x)\frac{\partial\vfi(x)}{\partial x_i}\, dx=0\qquad \forall\,i,j=1,\ldots,m,\qquad \forall\, \vfi \in \D(\rm).
\end{equation}
Recall that every vortex free vector field in a simply connected domain is a gradient of some function $u$. In $\rm$ it means that there exists $u\in W^{1,p}_{loc}(\rm)$ such that $v=\nabla u$. We say that a vector field $v$ is \textit{solenoidal} (or \textit{divergence-free}) in $\rm$ if
\begin{equation}
\label{def:solenoidal}
	\sum_{i=1}^m \int_\rm v^i(x)\frac{\partial\vfi(x)}{\partial x_i}\, dx=0\qquad \forall\, \vfi \in \D(\rm).
\end{equation}	
\medskip

Now let us consider vector fields on the probability space $(\O, \mathcal F,\mu)$. A vector field $f \in L^p(\O)$ is called \textit{potential} if $\mu$-almost all its realizations $x\mapsto f(T_x \o)$ are potentials, in the sense of \pref{def:potential}. The space of potential vector fields on $\O$ is denoted by $\lpot$. A vector field $f \in L^p(\O)$ is called \textit{solenoidal} if $\mu$-almost all its realizations $x\mapsto f(T_x \o)$ are solenoidal, in the sense of \pref{def:solenoidal}. The space of solenoidal vector fields on $\O$ is denoted by $\lsol$.

\begin{lemma}  
\label{lemma:closed}
 The spaces $\lpot,\lsol$ satisfy
\begin{eqnarray}
	\lpot \mbox{ and } \lsol  \mbox{ are \textit{weakly closed subspaces} of }L^p(\O).\label{eq:weaklyclose}\\
	E(\s \cdot \e)=E(\s)\cdot E(\e)\qquad \quad\mbox{for all }\s \in \lpsol,\ \e \in \lpot.
\end{eqnarray}	
\end{lemma}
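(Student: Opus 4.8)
The plan is to treat the two assertions separately. For \pref{eq:weaklyclose}, first note that both defining relations \pref{def:potential} and \pref{def:solenoidal} are linear in the vector field, so $\lpot$ and $\lsol$ are linear subspaces of $L^p(\O)$, and in particular convex. By Mazur's theorem a convex subset of a normed space is weakly closed exactly when it is strongly closed, so it suffices to prove strong closedness. Suppose then $f_n \to f$ strongly in $L^p(\O)$ with each $f_n \in \lpot$. By Lemma \ref{lemma:lp} there is a subsequence $(f_{n_k})$ and a full-measure set of $\o$ on which the realizations satisfy $\tilde f_{n_k} \to \tilde f$ strongly in $L^p_{loc}(\R^m)^m$. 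Fixing such an $\o$ and any $\vfi \in \D(\R^m)$, with support in some compact $K$, I would pass to the limit in \pref{def:potential}: since $\partial_i\vfi,\partial_j\vfi \in L^{p'}(K)$, H\"older's inequality gives convergence of the integrals, and as each term vanishes for $\tilde f_{n_k}$ it vanishes for $\tilde f$. Hence $\tilde f$ is potential for a.e. $\o$, i.e. $f \in \lpot$; the identical argument with \pref{def:solenoidal} handles $\lsol$.

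For the product formula, fix $\o$ in the full-measure set on which the realizations $\tilde\s(x)=\s(T_x\o)$ and $\tilde\e(x)=\e(T_x\o)$ are respectively solenoidal and potential and on which Birkhoff's Theorem \ref{th:birkhoff} holds, and fix any bounded domain $Q \subset \R^m$. Writing $\s_\n(x)=\s(T_{x/\n}\o)$ and $\e_\n(x)=\e(T_{x/\n}\o)$ for the rescaled realizations, the Remark following Theorem \ref{th:birkhoff} yields
$$ \s_\n \weakto E(\s)\ \text{ in } L^{p'}(Q)^m, \qquad \e_\n \weakto E(\e)\ \text{ in } L^p(Q)^m, $$
the weak limits being the (constant) expectations. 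Rescaling preserves the differential constraints, so $\curl\,\e_\n=0$ and $\div\,\s_\n=0$ for every $\n$; in particular $\div\,\s_\n \to 0$ strongly in $W^{-1,p'}(Q)$.

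These are exactly the hypotheses of the Div-Curl Lemma \ref{lemma:divcurl} with $f=0$, which gives
$$ \s_\n \cdot \e_\n \stackrel{*}{\weakto} E(\s)\cdot E(\e) \qquad \text{in } \D'(Q). $$
On the other hand $\s_\n\cdot\e_\n$ is precisely the rescaled realization of the scalar field $\s\cdot\e$, which lies in $L^1(\O)$ by H\"older's inequality; hence Birkhoff's theorem, in the mean-value form stated in the Remark, also gives $\s_\n\cdot\e_\n \weakto E(\s\cdot\e)$ in $\D'(Q)$. By uniqueness of the limit in $\D'(Q)$ the two constants coincide, which is the claimed identity. The point to handle with care is matching the two convergences in the same topology, namely distributional testing against $\D(Q)$, and in the second identification one invokes ergodicity to ensure that the spatial mean value of $\s\cdot\e$ equals its expectation $E(\s\cdot\e)$ rather than merely a conditional expectation over the invariant $\s$-algebra; this is the one step where the constancy of the limit, and thus the equality $E(\s\cdot\e)=E(\s)\cdot E(\e)$, genuinely relies on the ergodicity of $T$.
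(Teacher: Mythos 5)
Your proof of \pref{eq:weaklyclose} is essentially identical to the paper's: linearity plus Mazur reduces weak closedness to strong closedness, and Lemma \ref{lemma:lp} lets you pass to the limit in \pref{def:potential}--\pref{def:solenoidal} along a subsequence of realizations. For the product formula $E(\s\cdot\e)=E(\s)\cdot E(\e)$ the comparison is different in kind: the paper gives \emph{no} proof of that part at all (its proof stops after the closedness claim), so here you are supplying an argument the author omitted. Your route --- realize $\s,\e$, rescale, apply Birkhoff to identify the weak limits as the constants $E(\s),E(\e)$, and invoke the Div-Curl Lemma \ref{lemma:divcurl} to pass to the limit in the product --- is correct, and your closing observation that the constancy of the limits (hence the formula as stated) genuinely uses ergodicity is apt. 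Two small points deserve care. First, the identification $\s_\n\cdot\e_\n\weakto E(\s\cdot\e)$ rests on Birkhoff for the merely $L^1(\O)$ field $\s\cdot\e$; to upgrade the mean-value convergence over measurable sets to convergence against $\vfi\in\D(Q)$ one should approximate $\vfi$ uniformly by simple functions and use that $\int_K|\widetilde{\s\cdot\e}_\n|$ stays bounded (again by Birkhoff applied to $|\s\cdot\e|$). Second, there is a shorter, purely functional-analytic route via Lemma \ref{lemma:weylp}: write $\e=\hat\e+E(\e)$ with $\hat\e\in\vpot$ and $\s=\hat\s+E(\s)$ with $\hat\s\in\vpsol$, and expand $E(\s\cdot\e)$ using the orthogonality $(\vpot)^\perp=\vpsol\oplus\rm$ together with $E(\hat\s)=E(\hat\e)=0$; all cross terms vanish and the identity drops out without any limit passage. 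Your argument buys a self-contained proof from the ergodic theorem and compensated compactness; the orthogonality argument buys brevity at the cost of leaning on the cited Lemma \ref{lemma:weylp}.
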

\begin{proof}
Let us prove \pref{eq:weaklyclose} for $\lsol$, the proof for $\lpot$ is identical. Let $(u_n)\subset \lsol$, and assume that $u_n \to u$, strongly in $L^p(\O)$. Then, by definition of $\lsol$, $\mu$-almost every realization $\tilde u_n$ satisfies
\begin{equation}
\label{eq:unsol}
	\sum_{i=1}^m \int_\rm \tilde u_n(x)\frac{\partial\vfi(x)}{\partial x_i}\, dx=0\qquad \forall\, \vfi \in \D(\rm).
\end{equation}	
By Lemma \ref{lemma:lp} there exists a subsequence $(n_k)$ such that $\tilde u_{n_k} \to \tilde u$, strongly in $L^p_{loc}(\O)$. Passing to the limit as $k\to \infty$ in \pref{eq:unsol} yields $u\in \lsol$, and therefore $\lsol$ is strongly closed in $L^p(\O)$. Being $\lsol$ a linear subspace of $L^p(\O)$, strongly closed is equivalent to weakly closed.
\end{proof}

Set
\begin{align*}
	\vpot = \big\{ f \in \lpot\ :\ E(f)=0 \big\},\\
	\vsol = \big\{ f \in \lsol\ :\ E(f)=0 \big\}.
 \end{align*}
 It holds
 $$ \lpot = \vpot \oplus \rm,\qquad \qquad \lsol = \vsol \oplus \rm.$$

\begin{lemma} \textup{\cite[lemma 15.1]{JikovKozlovOleinik94}}
\label{lemma:weylp}
Let $p \in [1,+\infty)$, $p'=p/(p-1)$. The relations
$$
	(\vpot)^\perp = \vpsol \oplus \rm,\qquad (\vsol)^\perp = \vppot \oplus \rm
$$
hold in the sense of duality between the spaces $L^p(\O)$ and $L^{p'}(\O)$.	
\end{lemma}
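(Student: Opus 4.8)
The plan is to read the statement as an assertion about annihilators for the duality pairing $\langle f,g\rangle:=E(f\cdot g)=\int_\O f\cdot g\,d\mu$ between $L^p(\O)^m$ and $L^{p'}(\O)^m$, to prove the first identity by a double inclusion, and to deduce the second one by duality. The inclusion $\vpsol\oplus\rm\subseteq(\vpot)^\perp$ is immediate: writing a generic element as $g=h+c$ with $h\in\vpsol$ and $c\in\rm$, and taking any $f\in\vpot$, Lemma \ref{lemma:closed} gives $E(h\cdot f)=E(h)\cdot E(f)=0$ since $E(h)=0$, while $E(c\cdot f)=c\cdot E(f)=0$ because $E(f)=0$. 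Thus $g$ annihilates $\vpot$, and the sum is direct because $\vpsol$ consists of zero-mean fields whereas $\rm$ consists of constant fields.

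For the reverse inclusion I would first reduce to the zero-mean case. Given $g\in(\vpot)^\perp$, split $g=(g-E(g))+E(g)$; the constant $E(g)\in\rm$ already lies in the target space and annihilates $\vpot$, so $g_0:=g-E(g)$ lies in $(\vpot)^\perp$ and satisfies $E(g_0)=0$. It then suffices to show that a zero-mean $g_0\in L^{p'}(\O)^m$ which annihilates all of $\vpot$ is solenoidal, i.e. $g_0\in\vpsol$. The mechanism is to test $g_0$ against the stochastic gradients $D\psi=(D_1\psi,\dots,D_m\psi)$, where $D_1,\dots,D_m$ are the infinitesimal generators of the measure-preserving shift group $U(x):L^p(\O)\to L^p(\O)$, $(U(x)\psi)(\o)=\psi(T_x\o)$: each realization of $D\psi$ is $x\mapsto\nabla_x[\psi(T_x\o)]$, a genuine gradient, and $E(D_i\psi)=0$ by invariance, so $D\psi\in\vpot$. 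Because mass invariance (Property 2) makes $U(x)$ a group of isometries and the $D_i$ skew-symmetric, the vanishing of $E(g_0\cdot D\psi)=\sum_i E(g_0^i\,D_i\psi)$ for all $\psi$ expresses exactly that the stochastic divergence of $g_0$ is zero; transporting this to realizations via Lemma \ref{lemma:lp}, together with Birkhoff's Theorem \ref{th:birkhoff} to identify spatial averages with expectations, yields the divergence-free condition \pref{def:solenoidal} for $\mu$-a.e. realization, that is $g_0\in\vpsol$. The \emph{main obstacle} is precisely this last step: proving that the realization-defined space $\vpot$ is the $L^p$-closure of the stochastic gradients $\{D\psi\}$, and that the integration by parts (skew-symmetry of the $D_i$) holds on $\O$, is the genuine content of Weyl's decomposition; the rest is bookkeeping with mean values.

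Finally, for $1<p<\infty$ I would obtain the second relation $(\vsol)^\perp=\vppot\oplus\rm$ from the first by a bipolar argument rather than by repeating the construction. Applying the first identity with $p'$ in place of $p$ gives $(\vppot)^\perp=\vsol\oplus\rm$ as an annihilator in $L^p$; taking the pre-annihilator back in $L^{p'}$, and using that $\vppot$ is weakly closed (by the argument of Lemma \ref{lemma:closed} and reflexivity), the bipolar theorem identifies $(\vsol)^\perp\cap\{E=0\}$ with $\vppot$, and adding the constants $\rm\subseteq(\vsol)^\perp$ gives the claim. Alternatively one repeats the symmetric argument, characterizing $\vsol$ through the generators and reading the orthogonality to $g_0$ as the curl-free condition \pref{def:potential}.
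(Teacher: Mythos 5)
First, a structural point: the paper does not prove this lemma at all --- it is imported verbatim from \cite[Lemma 15.1]{JikovKozlovOleinik94} --- so there is no in-paper argument to compare yours against. Judged on its own terms, your outline follows the standard route of that reference: the easy inclusion $\vpsol\oplus\rm\subseteq(\vpot)^\perp$ via the orthogonality relation of Lemma \ref{lemma:closed}, the directness of the sum, the reduction $g=(g-E(g))+E(g)$ to the zero-mean case, and the idea of testing against stochastic gradients $D\psi$ built from the generators of the translation group. All of that is correct and is indeed how the result is proved.

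The problem is the step you flag yourself and then leave open: the implication ``$E(g_0\cdot D\psi)=0$ for all admissible $\psi$ implies that $\mu$-a.e.\ realization of $g_0$ is solenoidal in the sense of \pref{def:solenoidal}'' \emph{is} the lemma, and your proposal only asserts that it follows by ``transporting to realizations via Lemma \ref{lemma:lp} and Birkhoff's theorem.'' Birkhoff's theorem is not the tool that does this: it identifies spatial averages with expectations, whereas what is needed is a pointwise-in-$\o$ distributional identity for the realization $x\mapsto g_0(T_x\o)$. The mechanism that actually closes the gap (and is the one in \cite{JikovKozlovOleinik94}) is to take as test elements the orbit convolutions $\psi(\o)=\int_{\rm}\vfi(x)\,h(T_{-x}\o)\,dx$ with $\vfi\in\D(\rm)$ and $h$ ranging over a dense set of bounded random variables; then $D_i\psi(\o)=\int_{\rm}\partial_i\vfi(x)\,h(T_{-x}\o)\,dx$, and the measure-preserving change of variables $\o\mapsto T_x\o$ together with Fubini converts $E(g_0\cdot D\psi)=0$ into $\int_\O h(\o)\bigl(\sum_i\int_{\rm} g_0^i(T_x\o)\,\partial_i\vfi(x)\,dx\bigr)\,d\mu=0$, whence the inner integral vanishes for $\mu$-a.e.\ $\o$, which is exactly \pref{def:solenoidal}. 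Without this (or an equivalent density statement for $\{D\psi\}$ in $\vpot$), the hard inclusion is not established. A secondary issue: your bipolar argument for the second identity relies on reflexivity and so excludes $p=1$, which the statement covers; for $p=1$ one must run the symmetric direct argument (reading orthogonality to $\mathcal V^1_{\textup{sol}}$ as the curl-free condition \pref{def:potential}), which you mention only in passing.
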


\begin{rem}
If $v\in \lpot$, as noted above, for $\mu$-a.e. $\o \in \O$ we can find a function $u$ such that $\nabla_x u(x,\o) =v(T_x \o)$, but this does not imply that $u=u(T_x \o)$, i.e., that $u$ is a \textit{stationary} random field. In particular, we are not allowed to apply Birkhoff's theorem to the sequence $u(x/\n,\o)$.
\end{rem}

\subsection{A review of convex analysis}

We review some basic facts of convex analysis. We recall the statements
in the case when $(X,| \cdot |_X)$ is a reflexive Banach space, with dual space $(X',| \cdot |_{X'})$ and duality pairing ``$\left<\cdot,\cdot \right>$'', having in mind the applications $X=\rm,\ X=L^p(\O)$. 

Let $\vfi:X \to \R \cup \{+\infty\}$,  the domain of $\vfi$ is
$$ dom(\vfi):=\left\{ \e \in X: \vfi(\e)<+\infty\right\}.$$
If $\vfi$ is proper (i.e., $dom(\vfi)\neq \emptyset$), the Legendre--Fenchel transform (or Young--Fenchel transform, or conjugate function) $\vfi^*$ is given by
\begin{equation}
\label{def:conj}
  \vfi^*: X \to \R \cup \{+\infty\},\quad \s \ds
  \mapsto \sup_{\e \in X} \{\left<\s,\e\right> - \vfi(\e) \}.
\end{equation}
The growth of the conjugate function $\vfi^*$ is related to the growth of $\vfi$ by the following result.
\begin{lemma}
\label{lemma:stargr}
Let $c_0,c_1,c_2>0$, $p\in[1,+\infty]$, $p'=p/(p-1)$. If $\vfi$ satisfies
$$-c_0 +c_1|\e|_X^p \leq \vfi(\e) \leq c_0 + c_2|\e|_X^p\qquad   \forall\, \e\in X,$$
then there exist $\bar c_1,\bar c_2>0$ such that $\vfi^*$ satisfies
\begin{equation}
\label{eq:p1growth}
	-c_0 +\bar c_1|\s|_{X'}^{p'} \leq \vfi^*(\s) \leq c_0 + \bar c_2|\s|_{X'}^{p'}\qquad   \forall\, \s\in X'.
\end{equation}	
\end{lemma}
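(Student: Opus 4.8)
The goal is to prove the two-sided $p'$-growth bound \pref{eq:p1growth} for the conjugate $\vfi^*$, starting from the two-sided $p$-growth bound on $\vfi$. The natural strategy is to handle the upper and lower bounds separately, using monotonicity of the Legendre--Fenchel transform: if $\vfi \leq \psi$ pointwise then $\vfi^* \geq \psi^*$, because the supremum defining $\vfi^*$ in \pref{def:conj} is taken over a larger subtracted quantity. Thus the plan is to compute explicitly the conjugate of the pure power functions $\e \mapsto c_0 + c_i|\e|_X^p$ that bracket $\vfi$, and then transfer the inequalities.

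First I would record the elementary computation of the conjugate of $g(\e) = c\,|\e|_X^p$ for a constant $c>0$ and $1<p<\infty$. By definition $g^*(\s) = \sup_{\e}\{\left<\s,\e\right> - c|\e|_X^p\}$; optimizing the radial variable $t=|\e|_X$ against the best alignment $\left<\s,\e\right> = |\s|_{X'}\,t$ gives $g^*(\s) = \bar c\,|\s|_{X'}^{p'}$ with $\bar c = (p')^{-1}(cp)^{-p'/p}$, a standard Young-type identity. I would then note that adding a constant shifts the conjugate by the opposite constant: $(c_0 + g)^* = g^* - c_0$. Applying monotonicity to the upper bound $\vfi(\e)\leq c_0 + c_2|\e|_X^p$ yields $\vfi^*(\s) \geq -c_0 + \bar c_1 |\s|_{X'}^{p'}$, and applying it to the lower bound $\vfi(\e) \geq -c_0 + c_1|\e|_X^p$ yields $\vfi^*(\s) \leq c_0 + \bar c_2 |\s|_{X'}^{p'}$, where $\bar c_1,\bar c_2$ are the explicit constants produced by the power computation with $c_2$ and $c_1$ respectively. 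This gives precisely \pref{eq:p1growth}.

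The main technical point, and the only place requiring care, is the boundary cases $p\in\{1,+\infty\}$ included in the hypothesis $p\in[1,+\infty]$. For $p=1$ the lower power $c_1|\e|_X$ has conjugate equal to the indicator of the ball $\{|\s|_{X'}\leq c_1\}$ (so $\vfi^*$ is $+\infty$ outside a ball, consistent with $p'=\infty$), and dually $p=\infty$ forces $\vfi^*$ to be supported on a ball; in these degenerate cases the ``growth'' must be read with the conventions $|\s|^{\infty}$ meaning the ball indicator. For the genuinely interesting range $1<p<\infty$ — which is the one used in Theorem \ref{th:nonlinear} — everything is the clean power computation above, and I would present that case in full while remarking that the endpoints follow from the standard conjugacy of $t\mapsto c t$ and $t \mapsto \chi_{\{t\leq c\}}$.

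The step I expect to need the most attention is verifying that the radial reduction of the supremum is legitimate in an infinite-dimensional reflexive $X$: one must check that for each fixed radius $t$ one can choose $\e$ with $|\e|_X=t$ making $\left<\s,\e\right>$ arbitrarily close to $|\s|_{X'}\,t$, which is exactly the definition of the dual norm, so no attainment is needed and reflexivity is not even essential here. Once the supremum is reduced to the scalar optimization $\sup_{t\geq 0}\{|\s|_{X'}t - c\,t^p\}$, the rest is calculus. I would therefore organize the proof as: (1) scalar Young computation giving $\bar c$; (2) the constant-shift remark; (3) monotonicity of $(\cdot)^*$ applied to each bracketing inequality.
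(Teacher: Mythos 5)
Your argument is correct: conjugation reverses pointwise inequalities, constants pass through with a sign flip, and the scalar Young computation gives $(c|\cdot|_X^p)^*(\s)=\tfrac{1}{p'}(cp)^{-p'/p}|\s|_{X'}^{p'}$ for $1<p<\infty$, which yields exactly \pref{eq:p1growth}; your reduction of the supremum to the radial variable via the definition of the dual norm is also the right justification in infinite dimensions. The paper states Lemma~\ref{lemma:stargr} without proof (it is only ever invoked with $p\in(1,\infty)$, as in Theorem~\ref{th:nonlinear}), so there is nothing to compare against; your treatment is the standard one, and your remark that the endpoint cases $p\in\{1,+\infty\}$ require reinterpreting $|\s|_{X'}^{p'}$ as a ball indicator is a fair observation about the looseness of the statement rather than a gap in your proof.
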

The subdifferential of $\vfi$, computed in the point $\e \in dom(\vfi)$, is the set
\begin{equation*}
  \partial \vfi(\e) =  \left\{ \s \in X' \mbox{ such that }  \vfi(\xi) \geq
    \vfi(\e) + \left<\s,\xi - \e\right>  \quad \forall\, \xi \in X\right\}.
\end{equation*}
Some useful properties of convex functions are summarized in the
following lemma, for a proof we refer to \cite{EkelandTemam74}.

\begin{lemma}\label{lemma:convprop}
  Let $\vfi:X \to \R \cup {+\infty}$ be proper, then, $\forall\,\e \in X$, $\forall\,\s \in X'$,
  \begin{itemize}
  	\item[(i)] $\vfi^*$ is convex, lower-semicontinuous, and  $dom(\vfi^*)\neq \emptyset$.
  	\item[(ii)] $\vfi(\e) + \vfi^*(\s) \geq \left<\s,\e\right>$.
  	\item[(iii)] $\s  \in \partial \vfi(\e)\ \Rightarrow\ \e \in \partial \vfi^*(\s)$.
   	\item[(iv)] $[\vfi(\e)=\vfi^{**}(\e),\ \e \in \partial \vfi^*(\s)]\ \Rightarrow \s \in \partial \vfi(\e)$.
  	\item[(v)] $ \e \in \partial \vfi(\s)\ \ \Leftrightarrow\ \vfi(\e) + \vfi^*(\s) = \left<\s,\e \right>$.
  \end{itemize}
 \end{lemma}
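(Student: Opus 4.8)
The plan is to derive all five assertions from the definition \pref{def:conj} of the conjugate and the elementary Fenchel--Young inequality, proving the parts in the order (i), (ii), (v), and finally (iii) and (iv), which follow formally from (v). Throughout I use that $X$ is reflexive, so that $X''$ is identified with $X$ and every statement proved for $\vfi$ applies verbatim to $\vfi^*$ with the roles of $X$ and $X'$ exchanged.

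For (i), observe that for each fixed $\e\in X$ the map $\s \mapsto \left<\s,\e\right> - \vfi(\e)$ is affine and continuous on $X'$, and by \pref{def:conj} the conjugate $\vfi^*$ is the pointwise supremum over $\e \in dom(\vfi)$ of this family. A pointwise supremum of affine continuous functions is convex and lower-semicontinuous, giving the first two claims. For $dom(\vfi^*)\neq\emptyset$ I would exhibit one $\s$ making the supremum finite, which amounts to producing a continuous affine minorant $\vfi(\e)\geq\left<\s,\e\right>-c$: in the paper's setting the coercive lower bound $\vfi(\e)\geq -c_0+c_1|\e|^p$ of \pref{hyp:growth} supplies such a minorant explicitly. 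Part (ii) is then immediate, since \pref{def:conj} gives $\vfi^*(\s)\geq\left<\s,\e\right>-\vfi(\e)$ for every $\e\in X$, which rearranges to $\vfi(\e)+\vfi^*(\s)\geq\left<\s,\e\right>$.

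Next I would establish (v), the Fenchel equality. By definition $\s\in\partial\vfi(\e)$ means $\vfi(\xi)\geq\vfi(\e)+\left<\s,\xi-\e\right>$ for all $\xi\in X$, i.e. $\left<\s,\xi\right>-\vfi(\xi)\leq\left<\s,\e\right>-\vfi(\e)$ for all $\xi$; taking the supremum over $\xi$ gives $\vfi^*(\s)\leq\left<\s,\e\right>-\vfi(\e)$, that is $\vfi(\e)+\vfi^*(\s)\leq\left<\s,\e\right>$, and combined with (ii) this forces the equality. Conversely, if $\vfi(\e)+\vfi^*(\s)=\left<\s,\e\right>$, then for every $\xi$ the Fenchel--Young inequality $\left<\s,\xi\right>-\vfi(\xi)\leq\vfi^*(\s)=\left<\s,\e\right>-\vfi(\e)$ rearranges back to the subgradient inequality, so $\s\in\partial\vfi(\e)$.

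Finally I would deduce (iii) and (iv) using (v) together with the biconjugate inequality $\vfi^{**}(\e)\leq\vfi(\e)$, which itself follows from (ii) by taking the supremum over $\s$ in $\left<\s,\e\right>-\vfi^*(\s)\leq\vfi(\e)$. For (iii), $\s\in\partial\vfi(\e)$ gives $\vfi(\e)+\vfi^*(\s)=\left<\s,\e\right>$ by (v); applying Fenchel--Young to the pair $(\vfi^*,\vfi^{**})$ and the biconjugate inequality yields the chain $\left<\s,\e\right>\leq\vfi^*(\s)+\vfi^{**}(\e)\leq\vfi^*(\s)+\vfi(\e)=\left<\s,\e\right>$, so equality holds throughout and $\vfi^*(\s)+\vfi^{**}(\e)=\left<\s,\e\right>$, which by (v) applied to $\vfi^*$ means exactly $\e\in\partial\vfi^*(\s)$. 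For (iv), the hypothesis $\e\in\partial\vfi^*(\s)$ gives $\vfi^*(\s)+\vfi^{**}(\e)=\left<\s,\e\right>$ by (v) for $\vfi^*$; substituting the assumption $\vfi(\e)=\vfi^{**}(\e)$ turns this into $\vfi(\e)+\vfi^*(\s)=\left<\s,\e\right>$, whence $\s\in\partial\vfi(\e)$ by (v). The only step that is not a formal manipulation of Fenchel--Young is securing $dom(\vfi^*)\neq\emptyset$ in (i); this is where a genuine separation (Hahn--Banach) argument, or the explicit coercive bound of \pref{hyp:growth}, is the one piece of real content.
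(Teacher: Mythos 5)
Your proof is correct, and in fact the paper offers no proof of this lemma at all: it simply cites \cite{EkelandTemam74}, and your derivation is precisely the standard argument found there --- (i) and (ii) straight from the definition \pref{def:conj}, (v) by taking a supremum in the subgradient inequality and combining with (ii), and (iii)--(iv) as formal consequences of (v) applied to $\vfi^*$ together with the biconjugate inequality $\vfi^{**}\leq\vfi$, with reflexivity of $X$ correctly invoked to identify $X''$ with $X$ so that $\partial\vfi^*(\s)\subset X$. Two of your side remarks deserve emphasis because they slightly improve on the paper's statement. First, you rightly observe that for a merely \emph{proper} $\vfi$, as the lemma literally assumes, the claim $dom(\vfi^*)\neq\emptyset$ in (i) can fail (e.g.\ $\vfi(\e)=-|\e|_X^2$ has no continuous affine minorant, so $\vfi^*\equiv+\infty$); one needs either convexity plus lower-semicontinuity (via Hahn--Banach separation) or, as in the paper's actual application, the coercive bound of \pref{hyp:growth}, under which $\s=0$ already satisfies $\vfi^*(0)\leq c_0$. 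Second, you silently corrected the typo in (v), which should read $\s\in\partial\vfi(\e)$ rather than $\e\in\partial\vfi(\s)$; your statement and proof of the equivalence are the intended ones. One pedantic point you leave implicit but which causes no harm: in (iii), properness of $\vfi^*$ (needed to apply (v) to $\vfi^*$) is automatic from the hypothesis, since $\s\in\partial\vfi(\e)$ forces $\vfi^*(\s)=\left<\s,\e\right>-\vfi(\e)<+\infty$, so all quantities in your chain of inequalities are finite and no $\infty-\infty$ ambiguity arises.
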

  The equality in (v) is also known as Fenchel's equality, while (ii) is
  referred to as (Young--)Fenchel's \emph{in}equality. Note that for a proper, convex, lower-semicontinuous function $\vfi$, implication (iii) becomes an equivalence.

Define the orthogonal complement $V^\perp$ of a closed subspace $V \subset X$ as
$$ V^\perp:=\left\{ \s \in X': \forall\, \e \in V,\ \left<\s, \e\right> =0\right\}.$$
\begin{theo}[Fenchel-Rockafellar]
\label{th:rock}
Let $V \subset X$ be a closed subspace, let $\vfi$ be proper, convex, lower-semicontinuous. Assume that there is a point of $V$ where $\vfi$ is continuous, then
$$ \inf_{\e \in V}\vfi(\e) + \inf_{\s \in V^\perp}\vfi^*(\s)=0.$$
\end{theo}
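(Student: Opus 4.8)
The plan is to recognize the statement as the no-duality-gap case of Fenchel--Rockafellar duality, taking the ``second function'' to be the indicator $\iota_V$ of the subspace $V$ (equal to $0$ on $V$ and $+\infty$ elsewhere). Since $V$ is linear, its support function $\iota_V^*(\s)=\sup_{\e\in V}\langle\s,\e\rangle$ equals $0$ for $\s\in V^\perp$ and $+\infty$ otherwise, so $\iota_V^*=\iota_{V^\perp}$. With this identification the asserted identity is exactly the equality of the primal value $\inf_{\e\in V}\vfi(\e)$ with the dual value $-\inf_{\s\in V^\perp}\vfi^*(\s)$.

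First I would dispose of the easy inequality, namely weak duality ($\geq 0$). For every $\e\in V$ and every $\s\in V^\perp$, Young--Fenchel's inequality (Lemma~\ref{lemma:convprop}(ii)) gives $\vfi(\e)+\vfi^*(\s)\geq\langle\s,\e\rangle=0$, the last equality because $\e\in V$ and $\s\in V^\perp$. Fixing $\s$ and taking the infimum over $\e\in V$ gives $\vfi^*(\s)\geq-\inf_{\e\in V}\vfi(\e)$, and then the infimum over $\s\in V^\perp$ yields $\inf_{\e\in V}\vfi(\e)+\inf_{\s\in V^\perp}\vfi^*(\s)\geq 0$.

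The substantial inequality ($\leq 0$) is strong duality, which I would obtain by Hahn--Banach separation in $X\times\R$. Set $\alpha:=\inf_{\e\in V}\vfi(\e)$; at the continuity point $\e_0\in V$ one has $\alpha\leq\vfi(\e_0)<+\infty$, and the $p$-coercivity of $\vfi$ in the applications forces $\alpha>-\infty$, which I assume. Continuity at $\e_0$ means $\vfi$ is bounded above near $\e_0$, so $A:=\textup{int}(\textup{epi}\,\vfi)$ is a nonempty open convex set, and each of its points $(\e,t)$ satisfies $t>\vfi(\e)$. Put $B:=V\times\{\alpha\}$. Then $A\cap B=\emptyset$, since a common point $(\e,\alpha)$ would give $\e\in V$ with $\vfi(\e)<\alpha=\inf_V\vfi$, a contradiction. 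Separating $A$ from $B$ produces a nonzero $(\s^*,\lambda)\in X'\times\R$ and $\gamma\in\R$ with $\langle\s^*,\e\rangle+\lambda t\leq\gamma\leq\langle\s^*,\e'\rangle+\lambda\alpha$ for all $(\e,t)\in\textup{epi}\,\vfi$ and all $\e'\in V$. Letting $t\to+\infty$ forces $\lambda\leq 0$, and $\lambda=0$ would make $\langle\s^*,\cdot\rangle$ bounded above near the interior point $\e_0$, hence $\s^*=0$ and $(\s^*,\lambda)=0$; so $\lambda<0$. Normalizing $\lambda=-1$, the left inequality reads $\langle\s^*,\e\rangle-\vfi(\e)\leq\gamma$ for all $\e$, i.e.\ $\vfi^*(\s^*)\leq\gamma$; on the subspace side, $\gamma\leq\langle\s^*,\e'\rangle-\alpha$ for all $\e'\in V$, and since $\langle\s^*,\cdot\rangle$ is linear on $V$, boundedness below forces $\s^*\in V^\perp$ and $\gamma\leq-\alpha$. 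Thus $\inf_{\s\in V^\perp}\vfi^*(\s)\leq\vfi^*(\s^*)\leq-\alpha$, giving the desired $\leq 0$.

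I expect the main obstacle to be exactly the step ensuring the separating hyperplane is non-vertical, i.e.\ $\lambda\neq 0$: this is where the qualification hypothesis (continuity of $\vfi$ at a point of $V$) is indispensable, and without it a genuine duality gap can appear. The only delicate auxiliary point is verifying that continuity at $\e_0$ really yields a nonempty-interior epigraph whose points lie strictly above the graph; once the separating functional is in hand, the two inequalities and the identification $\s^*\in V^\perp$ are routine.
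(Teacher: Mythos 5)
Your proof is correct, but there is nothing in the paper to compare it against: Theorem~\ref{th:rock} is quoted as a classical result (the convex-analysis background is delegated to \cite{EkelandTemam74} and \cite{Rockafellar68}) and no proof is given in the text. What you have written is the standard textbook derivation of Fenchel--Rockafellar duality for the pair $(\vfi,\iota_V)$: weak duality from Young--Fenchel plus $\langle\s,\e\rangle=0$ on $V\times V^\perp$, and strong duality by separating $\textup{int}(\textup{epi}\,\vfi)$ from $V\times\{\alpha\}$, with the continuity hypothesis serving exactly to make the epigraph's interior nonempty and the separating hyperplane non-vertical. Your identification $\iota_V^*=\iota_{V^\perp}$ and the step ``$\langle\s^*,\cdot\rangle$ bounded below on the subspace $V$ forces $\s^*\in V^\perp$'' are both right, and as a by-product you even obtain attainment of the dual infimum at $\s^*$, which is the usual stronger form of the theorem. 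Two small points deserve a sentence each if this were written out in full: (i) the finiteness $\alpha>-\infty$ is genuinely needed (otherwise weak duality forces $\vfi^*\equiv+\infty$ on $V^\perp$ and the sum is the indeterminate $-\infty+\infty$); it is not among the stated hypotheses, but in the paper's application it is automatic from the coercivity \pref{hyp:growth} and \pref{eq:p1growth}, so flagging it as an assumption is appropriate. (ii) The separation inequality is first obtained on the open set $A=\textup{int}(\textup{epi}\,\vfi)$ and must be extended to all of $\textup{epi}\,\vfi$ via $\overline{\textup{int}\,C}\supseteq C$ for a convex set with nonempty interior, before you may take $t=\vfi(\e)$ to read off $\vfi^*(\s^*)\leq\gamma$. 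Neither point is a gap in substance.
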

We conclude this section extending these concept to integral functionals. Note that if $\vfi:\rm \times \O \to \R$ satisfies assumptions \eqref{hyp:phi}, then for all $u\in L^p(\O)$ the map
$ \o \mapsto \vfi(u(\o),\o)$ is measurable and 
$$\Phi(u):=\int_\O \vfi(u(\o),\o)\,d\mu < \infty.$$
We can now connect integral functionals and conjugate functions with the following theorem.
\begin{theo}[Rockafellar \cite{Rockafellar68}]
\label{th:rock2}
Let $\vfi:\rm \times \O \to \R$ satisfy assumptions \eqref{hyp:phi}, then 
$$ \Phi^*(z)=\int_\O \vfi^*(z(\o),\o)\,d\mu.$$
\end{theo}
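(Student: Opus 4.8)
The plan is to fix an arbitrary $z\in L^{p'}(\O)$ and prove the two inequalities $\Phi^*(z)\le\int_\O\vfi^*(z(\o),\o)\,d\mu$ and $\Phi^*(z)\ge\int_\O\vfi^*(z(\o),\o)\,d\mu$ separately, working throughout with the pairing $\langle z,u\rangle=\int_\O z(\o)\cdot u(\o)\,d\mu$ between $L^{p'}(\O)$ and $L^p(\O)$. First I would settle measurability and well-definedness of the right-hand side. By \eqref{hyp:meas} the map $\o\mapsto\vfi(\xi,\o)$ is measurable for each fixed $\xi$, and by \eqref{hyp:conv} together with \eqref{hyp:growth} the map $\xi\mapsto\vfi(\xi,\o)$ is convex and finite on all of $\rm$, hence continuous; thus $\vfi$ is a Carathéodory function, so jointly measurable and a normal convex integrand, and the same holds for $\vfi^*$. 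Consequently $\o\mapsto\vfi^*(z(\o),\o)$ is measurable, and by the $p'$-growth estimate \eqref{eq:p1growth} of Lemma \ref{lemma:stargr} applied with $z\in L^{p'}(\O)$ it belongs to $L^1(\O)$, so the target integral is finite.

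For the upper bound I would invoke the Fenchel--Young inequality of Lemma \ref{lemma:convprop}(ii): for every $u\in L^p(\O)$ and $\mu$-a.e.\ $\o$,
$$ z(\o)\cdot u(\o)-\vfi(u(\o),\o)\le\vfi^*(z(\o),\o).$$
All three terms are integrable (the first by Hölder, the second finite by the remark preceding the theorem, the third by the previous paragraph), so integration over $\O$ gives $\langle z,u\rangle-\Phi(u)\le\int_\O\vfi^*(z(\o),\o)\,d\mu$, and taking the supremum over $u\in L^p(\O)$ yields $\Phi^*(z)\le\int_\O\vfi^*(z(\o),\o)\,d\mu$.

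The reverse inequality is the substantial part, and the idea is to exhibit a single competitor $u\in L^p(\O)$ whose pointwise Fenchel pairing nearly saturates $\vfi^*(z(\o),\o)$ almost everywhere. Fix $\e>0$. Since $(\o,\xi)\mapsto\vfi(\xi,\o)-z(\o)\cdot\xi$ is a normal integrand and $\vfi^*(z(\cdot),\cdot)$ is measurable and finite a.e., a measurable selection theorem for normal integrands produces a measurable $u_\e:\O\to\rm$ with
$$ z(\o)\cdot u_\e(\o)-\vfi(u_\e(\o),\o)\ge\vfi^*(z(\o),\o)-\e\qquad\mbox{for $\mu$-a.e. }\o.$$
To see that $u_\e\in L^p(\O)$, I would combine this with the coercivity $-c_0+c_1|u_\e(\o)|^p\le\vfi(u_\e(\o),\o)$ from \eqref{hyp:growth}, the lower bound $\vfi^*(z(\o),\o)\ge-c_0$ from \eqref{eq:p1growth}, and Young's inequality $|z(\o)||u_\e(\o)|\le\tfrac{c_1}{2}|u_\e(\o)|^p+C|z(\o)|^{p'}$, obtaining
$$ \tfrac{c_1}{2}\,|u_\e(\o)|^p\le 2c_0+\e+C\,|z(\o)|^{p'},$$
whose right-hand side is integrable because $z\in L^{p'}(\O)$ and $\mu$ is finite; hence $u_\e\in L^p(\O)$. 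Using $u_\e$ as a competitor in the definition of $\Phi^*$ and integrating the selection inequality gives
$$ \Phi^*(z)\ge\langle z,u_\e\rangle-\Phi(u_\e)\ge\int_\O\vfi^*(z(\o),\o)\,d\mu-\e\,\mu(\O).$$
Since $\mu(\O)=1$ and $\e>0$ is arbitrary, the reverse inequality follows, and the two bounds together give the claimed identity.

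I expect the main obstacle to be the construction of the measurable near-maximizer $u_\e$: passing from the family of pointwise suprema defining $\vfi^*(z(\o),\o)$ to a genuinely measurable function requires the theory of normal integrands and a measurable projection (Aumann-type) selection argument, after which one must still verify through the coercivity-driven a priori estimate that the selection lands in $L^p(\O)$. Everything else reduces to the pointwise Fenchel--Young inequality and Fubini-type integration, so the analytic weight of the proof rests entirely on the selection step.
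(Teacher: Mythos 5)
Your proof is correct, but note that the paper does not actually prove this statement: it is imported wholesale from Rockafellar \cite{Rockafellar68}, so there is no internal argument to compare against. Your self-contained proof is the standard one for this interchange of conjugation and integration, correctly specialized to the present setting. The easy inequality $\Phi^*(z)\le\int_\O\vfi^*(z(\o),\o)\,d\mu$ is exactly as you say, and for the reverse inequality your a priori estimate showing $u_\e\in L^p(\O)$ is the right use of the two-sided growth bounds; it is precisely what makes the near-maximizing selection an admissible competitor. The only step carrying real weight is, as you identify, the measurable $\e$-selection, and in this setting it can even be done without Aumann-type projection theorems: since $\xi\mapsto z(\o)\cdot\xi-\vfi(\xi,\o)$ is continuous (finite convex functions on $\rm$ are locally Lipschitz), one has $\vfi^*(z(\o),\o)=\sup_{\xi\in\mathbb{Q}^m}\{z(\o)\cdot\xi-\vfi(\xi,\o)\}$, and one may take $u_\e(\o)$ to be the first point, in a fixed enumeration of $\mathbb{Q}^m$, at which the supremum is attained up to $\e$; this is manifestly measurable and also settles the measurability of $\o\mapsto\vfi^*(z(\o),\o)$ without invoking normal-integrand theory. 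What the citation buys over your argument is generality: Rockafellar's theorem holds for extended-real-valued normal convex integrands on decomposable spaces, where the finiteness and coercivity you rely on are unavailable. What your argument buys is that, under the hypotheses \eqref{hyp:phi} actually assumed in the paper, the result is elementary and the exposition could have been made self-contained at this point.
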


\section{Proof of Theorem \ref{th:nonlinear}}
\label{sec:proof}

\begin{proof}
The proof is divided into three steps.

\paragraph{Step I.} The auxiliary problem.  
Let $\xi\in \rm$ be fixed, we look for functions $v\in L^p(\O)$, $z \in L^{p'}(\O)$ such that
\begin{equation}
\label{eq:auxnonlin}
	v\in \vpot,\quad z \in \lpsol\qquad \mbox{with}\quad z\in \partial \phi(\xi + v).
\end{equation}
By Lemma \ref{lemma:convprop}-(iii) and (iv), \pref{eq:auxnonlin} is equivalent to
\begin{equation}
\label{eq:auxnonlind}
	v\in \vpot,\quad z \in \lpsol\qquad \mbox{with}\quad \xi + v \in \partial \phi^*(z).
\end{equation}
Define the functionals
	\begin{eqnarray}
		F:L^p(\O)\to \R, \quad& &\ds F(u):= \int_\O \phi(\xi + u(\o),\o)d\mu,  \vspace{0.2cm}\label{def:F}\\
		G:L^{p'}(\O) \to \R, \quad  & &\ds G(y):= \int_\O \phi^*(y(\o),\o) - \xi\cdot y(\o)\,d\mu. \label{def:G}
	\end{eqnarray}
We claim that \pref{eq:auxnonlin} (or \pref{eq:auxnonlind}) is equivalent to the minimization problem:
\begin{subequations}
	\label{aux:min}
	\begin{eqnarray}
		\ds v \mbox{ solves:}&\ds \min_{u \in \vpot}F(u)   \vspace{0.2cm}\label{aux:mindir}\\
		and & &\vspace{0.2cm}\nonumber\\
		\ds z \mbox{ solves:}&\ds \min_{y \in \lpsol} G(y). \label{aux:mindual}
	\end{eqnarray}
\end{subequations}
Let us show that \pref{eq:auxnonlin} implies \pref{aux:mindir}. For $f \in L^1(\O)$, let $\hat f:= f - E(f)$ be the \emph{fluctuation} of $f$, so that 
$$E(\hat f) =0,\qquad f = \hat f + E(f).$$ 
By definition of subdifferential, $z \in \partial\phi(\xi +v)$ if and only if
$$ z\cdot(\lambda - (\xi +v)) + \phi(\xi + v)\leq \phi(\lambda)\qquad \forall\,\lambda \in \rm.$$
Then, for all $u \in \lpot$ such that $E(u)=\xi$, we have, $\mu$-a.e. in $\O$,
$$ z\cdot(\xi + \hat u - (\xi +v)) + \phi(\xi + v)\leq \phi(\xi + \hat u).$$
By Weyl's Lemma \ref{lemma:weylp}, it holds 
$$ \int_\O z(\o) \cdot \hat u(\o)\, d\mu = 0 = \int_\O z(\o) \cdot v(\o)\,d\mu,$$
and we conclude
$$ \int_\O \phi(\xi + v(\o),\o)\, d\mu \leq \int_\O \phi(\xi + \hat u(\o),\o)\, d\mu$$
for all $\hat u \in \lpot$ such that $E(\hat u)=0$, i.e., for all $\hat u \in \vpot$. The proof that \pref{eq:auxnonlind} implies \pref{aux:mindual} follows in the same way.

Let us prove the converse implication. By definition of $F$ \pref{def:F} and definition of conjugate function \pref{def:conj}, for all $y\in L^{p'}(\O)$ it holds
\begin{align*} 
	F^*(y) &= \sup_{u \in L_{\phantom *}^p(\O)}\left\{  \left< y ,  u\right> - F(u)\right\}\\
			& = \sup_{u \in L_{\phantom *}^p(\O)}\left\{  \int_\O y\cdot u - \phi(\xi +u)\, d\mu \right\}\\
			&= \sup_{v \in L_{\phantom *}^p(\O)}\left\{  \int_\O y\cdot (v -\xi) -\phi(v)\, d\mu \right\}\\
			&=\sup_{v \in L_{\phantom *}^p(\O)}\left\{   \left<y, v \right> -\int_\O\phi(v)\, d\mu \right\} - \int_\O y\cdot \xi\,d\mu\\
			&\stackrel{Th. \ref{th:rock2}}{=}\int_\O \phi^*(y) - y\cdot\xi\, d\mu=G(y).
\end{align*}	
Let now $\xi\in \rm$ and let $(v,z)$ be a solution of \pref{aux:min}. Since by Weyl's theorem $(\vpot)^\perp = \lpsol$, by Theorem \ref{th:rock} we obtain
\begin{equation}
\label{eq:userock}
\begin{split}
	\int_\O \phi(\xi +v) = \min_{u \in \vpot}F(u) \stackrel{Th. \ref{th:rock}}{=} &-\min_{y\in \lpsol}F^*(y)\\
	=&-G(z)= \int_\O \xi\cdot z - \phi^*(z)= \int_\O (\xi+v)\cdot z - \phi^*(z).
	\end{split}
\end{equation}
Since by Lemma \ref{lemma:convprop}-(ii)
$$ \phi(\xi +v) \geq (\xi+v)\cdot z - \phi^*(z)\quad \mbox{for $\mu$-a.e. }\o \in \O,$$
by \pref{eq:userock} we conclude that
$$ \phi(\xi +v)  + \phi^*(z) =  (\xi+v)\cdot z  \quad \mbox{for $\mu$-a.e. }\o \in \O,$$
and therefore, by \pref{lemma:convprop}-(v), that $(v,z)$ solves \pref{eq:auxnonlin}. 
\medskip

We turn now to the existence of a solution to the minimization problems \pref{aux:mindir}-\pref{aux:mindual}. We note that
\begin{itemize}
	\item by Lemma \ref{lemma:closed} the subspaces $\lsol$ and $\vpot$ are  closed with respect to the \emph{weak} convergence in $L^p(\O)$. 
	\item  $\phi,\phi^*$ are convex functions, with growth \pref{hyp:growth} and \pref{eq:p1growth} respectively, which implies that the functionals $F$ and $G$ defined in \pref{def:F} and \pref{def:G} are coercive and lower-semicontinuous with respect to the \emph{weak} convergence in $L^p(\O)$. 
\end{itemize}
Then, any minimizing sequence for $F$ ($G$) is weakly compact in $L^p(\O)$ (resp. $L^{p'}(\O)$) and, owing to lower-semicontinuity, any weak limit point is a solution of \pref{aux:mindir} (resp. \pref{aux:mindual}). 
\begin{rem}
If $\phi$ and $\phi^*$ are strictly convex, then the solution of \pref{aux:min} is unique. Under the hypothesis of Theorem \ref{th:nonlinear} problem \pref{aux:min} might have more than one solution.
\end{rem}

\paragraph{Step II.} The homogenized functional $\phi_0$.  Let us define  $\phi_0:\rm \to \R$
\begin{equation}
\label{def:phi0}
	\phi_0(\xi):=\inf_{u\in \vpot}\int_\O \phi(\xi + u(\o),\o)\, d\mu.
\end{equation}
We claim that 
\begin{equation}
\label{eq:equalityhomo}
\begin{split}
	\mbox{for all $\xi \in \rm$, every solution $(v,z)$ of problem \pref{eq:auxnonlin} satisfies}\\
	E(z) \in \partial \phi_0(\xi).\hspace{5cm}
\end{split}	
\end{equation}	
The proof is a direct translation to functions in $L^p(\O)$ of the proof of Theorem 3.2 in \cite{Visintin09}.
\medskip

\begin{proof}[Proof of \pref{eq:equalityhomo}] Recall that $E(f)$ denotes the expected value $\int_\O f\,d\mu$ of a random function $f$. 
Define $\psi_0 :\rm \to \R$
\begin{equation}
\label{def:psi0}
	\psi_0(\zeta):= \inf\left\{ \int_\O \phi^*(z(\o),\o)\,d\mu\ :\quad z \in \lpsol,\ E(z)=\zeta\right\}.
\end{equation}	
It is not obvious, a priori, whether $\psi_0=\phi_0^*$ or not. It will become evident by the end of the proof, but at the moment we have to treat this two functions as separate objects.

Owing to Fenchel's inequality (Lemma \ref{lemma:convprop}-(ii)),  $\forall\,\lambda \in \rm$, $\forall\,u\in \vpot$, $\forall\, s \in \lpsol$,
\begin{equation*}
	\lambda \cdot E(s) = \int_\O (\lambda +u(\o))\cdot s(\o)\, d\mu \leq \int_\O \phi(\lambda + u(\o),\o) + \phi^*(s(\o),\o)\, d\mu.
\end{equation*}
Passing to the infimum over $u\in \vpot$ and recalling definitions \pref{def:phi0} and \pref{def:psi0} we get
\begin{equation}
\label{eq:liminf}
	\lambda \cdot E(s)  \leq \phi_0(\lambda) + \psi_0(E(s)),
\end{equation}
and passing to the supremum on $\lambda \in \rm$ we read
\begin{equation}
\label{eq:psiinf}
	\phi^*(E(s))=\sup_{\lambda \in \rm} \left\{ \lambda \cdot E(s)  - \phi_0(\lambda)\right\} \leq \psi_0(E(s)).
\end{equation}
Let now $v,z$ be the solutions of the auxiliary problem
\begin{equation}
\label{eq:auxrecall}
	v\in \vpot,\quad z \in \lpsol\qquad \mbox{with}\quad z\in \partial \phi(\xi + v),
\end{equation}
for $\xi \in \rm$. By Fenchel's equality (Lemma \ref{lemma:convprop}-(v))
\begin{equation*}
	\xi \cdot E(z) =\int_\O (\xi +v(\o))\cdot z(\o)\, d\mu = \int_\O \phi(\xi + v(\o),\o) + \phi^*(z(\o),\o)\, d\mu,
\end{equation*}
and therefore, by definitions \pref{def:phi0} and \pref{def:psi0}, we get
\begin{equation}
\label{eq:limsup}
	\xi \cdot E(z)  \geq \phi_0(\xi) + \psi_0(E(z)).
\end{equation}
Combining \pref{eq:liminf} and \pref{eq:limsup} we obtain that for all $z$ which solve \pref{eq:auxrecall}
\begin{equation}
\label{eq:psieq}
	\psi_0(E(z))=\phi_0^*(E(z)).
\end{equation}	
Combining \pref{eq:psiinf}, \pref{eq:limsup}, and \pref{eq:psieq}, we obtain that for all $\xi\in \rm$, for all $(v,z)$ satisfying \pref{eq:auxrecall},
$$ \xi \cdot E(z)  = \phi_0(\xi) + \phi_0^*(E(z)), $$
and therefore, by Fenchel's equality, that
$$ E(z) \in \partial \phi_0(\xi).$$
This concludes the proof of \pref{eq:equalityhomo} and Step II.
\end{proof}
Additionally, we note that since for all $\lambda\in \rm$ there exist $\bar z \in \lpsol$, $\bar u \in \lpot$ such that
$$ E(\bar z)=\lambda,\qquad \bar z \in \partial \phi(\bar u), $$
equality \pref{eq:psieq} is valid for all $\lambda \in \rm$, and therefore $\psi_0=\phi_0^*$.

\paragraph{Step III.} Convergence. We recall that an operator $A:D(A)\subset X \to \mathcal P(X)$, defined on a Hilbert space $X$, is said to be \textit{monotone} if
$$ \left< y_2 -y_1,x_2 -x_1\right>\geq 0\qquad \forall\,x_i \in D(A),\quad \forall\, y_i \in A(x_i).$$
A monotone operator is said to be \textit{maximal monotone} if its graph is maximal among all monotone sets, in the sense of set inclusions. We are going to use the following two fundamental properties.
\begin{itemize}
	\item The subdifferential of a convex function $\phi:\rm \to \R$ is a maximal monotone operator.
	\item Let $\bar x,\, \bar y \in \rm$, if $\partial \phi$ is maximal monotone, then
\begin{equation}
\label{eq:maximal}
	\Big\{ \forall\, x\in \rm,\ \exists\, y \in \partial \phi(x),\quad \left< \bar y -y, \bar x -x\right>\geq 0\Big \}\quad  \Rightarrow\quad \bar y \in \partial \phi(\bar x).
\end{equation}	
\end{itemize}

Let now $\xi,v,z$ be as in \pref{eq:auxrecall}, we define the oscillating test functions
$$ v_\n(x):= x \mapsto v\left( T_{x/\n}\o \right),\qquad z_\n(x):= x \mapsto z\left( T_{x/\n}\o \right),$$
$$ \phi_\n(\cdot,x)=\phi(\cdot, T_{x/\n}\o),$$
which satisfy
\begin{subequations}
	\begin{eqnarray}
		&& v_\n \in L^p_\textup{pot}\left(\rm;\rm\right)\label{eq:gradv},\\
		&& z_\n \in L^{p'}_\textup{sol}\left(\rm;\rm\right)\label{eq:divz},\\
		&& z_\n \in \partial\phi_\n(\xi + v_\n).\label{eq:zetainpsi}
	\end{eqnarray}
\end{subequations}
We can find a set $N\subset \O$, with $\mu(N)=0$, such that conditions \pref{hyp:conv} and \pref{hyp:growth} are satisfied for all $\o\in \O\backslash N$. Fix $\o \in \O\backslash N$ and consider now the problem
\begin{align}
	-\textup{div}\, \sn &= f^\n & 			&\mbox{weakly in } Q,\label{eq:divsn}\\
	\sn& \in \partial \phi_\n (\nabla \un) &	&\mbox{a.e. in } Q, \label{eq:subsn}\\
	\un &=0 & 						&\mbox{a.e. on }\partial Q.
\end{align}	
As in the proof of Step II above, $(\sn,\un)$ is a solution of \pref{eq:divsn}-\pref{eq:subsn} if and only if $\un$ realizes
\begin{equation}
\label{eq:minueta}
    \min \left\{ \int_Q \phi_\n(\nabla u(x),x) -f^\n(x)u(x)\, dx,\quad u \in W^{1,p}_0(Q) \right\}
\end{equation}    
and $\sn$ realizes
\begin{equation}
\label{eq:minseta}
    \min \left\{ \int_Q \phi^*_\n( s(x),x) \, dx,\quad s \in L^{p'}(Q;\rm),\ -\div\, s = f^\n \right\}.
\end{equation}    
By direct methods of calculus of variations, as in the proof of Step II, for all $\n>0$ there exists a (not necessarily unique) solution $(\un,\sn)$. Denoting by $_p\langle\, ,\, \rangle_{p'}$ the duality between $W^{1,p}_0(Q)$ and $\W(Q)$, by Lemma \ref{lemma:convprop} -(v), \pref{hyp:growth}, and Lemma \ref{lemma:stargr}
\begin{align*}
	\min\{c_1,\bar c_1\}&\left({\|\nabla \un\|}^p_{L_{\phantom 0}^p(Q)}  +  {\|\sn\|}^{p'}_{L_{\phantom 0}^{p'}\!(Q;\rm)}\right) - c_0|Q| \\
	&\leq \int_Q \phi_\n(\nabla \un) + \phi^*_\n(\sn)\, dx 
	= \int_Q \nabla \un \cdot \sn\, dx=\ _p \langle u\, ,\,f^\n\rangle_{p'}.
\end{align*}	
Since $(f^\n)$ is a converging sequence, we can assume that $\|f^\n\|_{\W\!(Q)}$ is bounded by a constant $C_f$, and by Poincar\'e's lemma we can find a constant $C>0$, depending on $Q,C_f,c_0,c_1,\bar c_1$, but independent of $\n$, such that
$$ {\|\un\|}_{W_{\phantom 0}^{1,p}(Q)}\leq C,\qquad {\|\sn\|}_{L_{\phantom 0}^{p'}\!(Q;\rm)}\leq C.$$
By compactness, for all $\o \in \O\backslash N$ we can find a subsequence $(\n_k)$ and a limit point $(u^*(\o),\s^*(\o))=(u^*,\s^*)\in W^{1,p}_0(Q)\times L_{\phantom 0}^{p'}(Q;\rm)$ such that 
\begin{equation}
\label{eq:convnu}
	u^\n(\cdot,\o) \weakto u^* \qquad \mbox{weakly in }W^{1,p}(Q)
\end{equation}	
and
\begin{equation}
\label{eq:convsn}
	\s^\n(\cdot,\o) \weakto \s^* \qquad \mbox{weakly in }L_{\phantom 0}^{p'}(Q).
\end{equation}	
Testing equation \pref{eq:divsn} with $g\in W^{1,p}_0(Q)$ we have
$$ \int_Q \sn \cdot \nabla g\, dx =\, _p \langle g\, ,\,f^\n\rangle_{p'}, $$
and passing to the limit as $\n \to 0$ we find
$$ \int_Q \s^* \cdot \nabla g\, dx =\, _p \langle g\, ,\,f\rangle_{p'}, $$
that is,
$$ -\div\, \s^* = f.$$
It remains to prove that
$$ \s^* \in \partial \phi_0(\nabla u^*).$$
By monotonicity of $\partial \phi_\n$ and equations \pref{eq:subsn} and \pref{eq:zetainpsi}, $\forall\, \n>0$, $\forall\, \xi \in \rm$, for a.e. $x\in Q$, and $\mu$-a.e. $\o \in \O$ it holds
\begin{equation}
\label{eq:monot}
	( \s^\n - z_\n)\cdot(\nabla u^\n -(\xi +v_\n)) \geq 0.
\end{equation}	
By Birkhoff's ergodicity Theorem \ref{th:birkhoff} and the definition of $v_\n$ and $z_\n$ 
\begin{equation}
\label{eq:convbirk} 
	z^\n \weakto E(z)\quad \mbox{weakly in }L_{\phantom 0}^{p'}(Q)\qquad \mbox{and}\qquad \xi +v_\n \weakto \xi\quad \mbox{weakly in }L_{\phantom 0}^p(Q).
\end{equation}	
By \pref{eq:divsn} and \pref{eq:divz} 
$$ \left\{\div\, \s^\n\right\}_\n,\ \left\{\div\, z_\n\right\}_\n\qquad \mbox{are compact sets in }\W(Q),$$
and by \pref{eq:gradv} $\curl\, v_\n =0$. Therefore the hypothesis of the \textit{div-curl} Lemma \ref{lemma:divcurl} are satisfied, and by convergences \pref{eq:convnu}, \pref{eq:convsn}, and \pref{eq:convbirk} we can pass to the limit as $\n \to 0$, in \pref{eq:monot}, obtaining
$$ (\s^* - E(z))\cdot(\nabla u^* - \xi)\geq 0\qquad \mbox{for a.e. }x\in Q.$$
Owing to Step II, we know that $E(z)\in \partial \phi_0(\xi)$, and by the arbitrariety of $\xi\in \rm$ and the maximality property \pref{eq:maximal} we obtain that $ \s^* \in \partial \phi_0(\nabla u^*)$ for a.e. $x\in \rm$. This concludes the proof of Theorem \ref{th:nonlinear}.
\end{proof}

\addcontentsline{toc}{section}{Bibliography}
%
\def\cprime{$'$}

\end{document}